\documentclass[a4paper,10pt]{amsart}

\usepackage[english]{babel}

\usepackage[a4paper,top=2cm,bottom=2cm,left=3cm,right=3cm,marginparwidth=1.75cm]{geometry}

\usepackage{amsmath}
\usepackage{graphicx}
\usepackage[colorlinks=true, allcolors=blue]{hyperref}
\usepackage{comment}
\usepackage{amsmath,amsthm,amssymb,amsfonts,amscd, amsbsy,mathtools}
\usepackage{float}

\usepackage{mathrsfs} 

\newtheorem{theorem}{Theorem}[section]
\newtheorem{proposition}{Proposition}[section]

\newtheorem{definition}{Definition}[section]

\newtheorem{example*}{Example}
\newtheorem{remark}{Remark}
\newtheorem{remark*}{Remark}
\newtheorem{lemma*}{Lemma}
\newtheorem{theorem*}{Theorem}
\newtheorem{proposition*}{Proposition}
\newtheorem{corollary*}{Corollary}
\newtheorem{definition*}{Definition*}

\newcommand{\C}{\ensuremath{\mathbb{C}}}
\newcommand{\R}{\ensuremath{\mathbb{R}}}
\renewcommand{\S}{\ensuremath{\mathbb{S}}}
\renewcommand{\H}{\ensuremath{\mathbb{H}}}
\newcommand{\g}[1]{\ensuremath{\mathfrak{#1}}}

\DeclareMathOperator{\Isom}{Isom}

\DeclareMathOperator{\SL}{SL}
\DeclareMathOperator{\Sol}{Sol}
\DeclareMathOperator{\Nil}{Nil}

\title[Homogeneous Hypersurfaces in 4-dim. Thurston Geometries with 4-dim. Isometry Group]{Homogeneous Hypersurfaces in 4-dimensional Thurston Geometries with 4-dimensional Isometry Group}
\author{Ferreira, T.A.}

\address{Tarcios Andrey Ferreira - Instituto de Ciências Matemáticas e de Computação, Universidade de São Paulo, 13566-590, São Carlos-SP, Brazil }
\email{t.a.ferreira@icmc.usp.br}

\begin{document}

\subjclass[2020]{53C30, 53C40}

\keywords{homogeneous surface, Thurston geometry, Lie group, isometric action, homogeneous space, constant mean curvature.} 

\begin{abstract}
We classify, up to conjugacy, the 3-dimensional subalgebras of the Lie algebras associated with the 4-dimensional Thurston geometries whose isometry groups have dimension 4. Since homogeneous hypersurfaces arise as orbits of subgroups of the isometry group acting transitively on the ambient space, we determine all such subgroups and describe their corresponding orbits, thereby obtaining a classification of the homogeneous hypersurfaces, up to ambient isometries, and we study the geometry of the orbit foliations in these geometries.
\end{abstract} 

\maketitle

\section{Introduction}

The theory of hypersurfaces constitutes a central branch of classical Differential Geometry, devoted to the study of manifolds $M^n$ immersed or embedded into an ambient manifold $N^{n+1}$. The intrinsic geometry of $M^n$ is determined by the metric induced from the ambient space, whereas its extrinsic geometry is encoded by the second fundamental form, which measures the way the hypersurface bends inside $N^{n+1}$. The investigation of hypersurfaces satisfying distinguished geometric conditions remains an active and influential area of research in Differential Geometry.

Among the most important classes of hypersurfaces are the {\em homogeneous hypersurfaces}, namely those for which the isometry group of the ambient space acts transitively on the hypersurface. The study of such objects has developed into a major research topic over the last century, and the classification of homogeneous hypersurfaces in space forms dates back to the early twentieth century through foundational contributions by Cartan, Levi-Civita, Segre, Somigliana, and Kobayashi. For example, the classical theorem of Kobayashi \cite{Ko1958} states that every compact homogeneous hypersurface in $\mathbb{R}^{n+1}$ is necessarily isomorphic to the sphere. 

This line of research naturally led to the study of hypersurfaces whose sufficiently close parallel hypersurfaces possess constant mean curvature, the so-called {\em isoparametric hypersurfaces}. The associated codimension-one foliations are called {\em isoparametric foliations}. Cartan proved that a hypersurface in a space form is isoparametric if and only if it has constant principal curvatures. In the Euclidean and hyperbolic settings, this condition also characterizes homogeneous hypersurfaces. The spherical case, however, is substantially more intricate, and a complete classification was achieved only recently through the work of Chi \cite{Chi2020}.

Over the past decades, considerable attention has also been devoted to the study of minimal and constant mean curvature surfaces and, more generally, hypersurfaces in homogeneous manifolds. Roughly speaking, such ambient spaces represent the simplest and most symmetric geometries after the space forms. In dimension three, they are closely related to Thurston's eight model geometries \cite{scott83}, namely $\mathbb{R}^3$, $\mathbb{S}^3$, $\mathbb{H}^3$, $\mathbb{S}^2 \times \mathbb{R}$, $\mathbb{H}^2 \times \mathbb{R}$, $\widetilde{\mathrm{SL}}(2,\mathbb{R})$, $\mathrm{Nil}^3$, $\mathrm{Sol}^3$. This geometries can be  classified according to the dimension of their isometry group, which may be equal to $3$, $4$, or $6$.  The case of dimension $6$ corresponds to the space forms. When the dimension of the isometry group is $4$, one obtains the $\mathbb{E}(\kappa,\tau)$ spaces, which arise as total spaces of Riemannian submersion with bundle curvature $\tau$ over a simply connected complete surface $M^2(\kappa)$ of constant curvature $\kappa$. In all these spaces, the equivalence between isoparametric surfaces and surfaces with constant principal curvatures still holds \cite{MiguelManzano}. When the isometry group has dimension $3$, the ambient spaces are precisely the unimodular and non-unimodular Lie groups endowed with left-invariant metrics, excluding the space forms and the $\mathbb{E}(\kappa,\tau)$ spaces. In particular, the classification of homogeneous surfaces in homogeneous $3$-manifolds was completed in \cite{MiguelTarciosTomas}, providing an important foundation for our investigation.

The main purpose of this work is to classify homogeneous hypersurfaces in the four-dimensional Thurston geometries whose isometry group has dimension four, classified by Filipkiewicz in \cite{Filipkiewicz} (see Table~\ref{tab:Thurston_4d_geo} for the complete list of such geometries). Although no analogue of Thurston's geometrization theorem is known in dimension four, the study of four-dimensional geometries and their submanifolds remains highly relevant from the Riemannian point of view, as it contributes to a deeper understanding of the geometric and algebraic structure of these spaces.

\begin{table}[h]
    \centering
    \begin{tabular}{|c|c|c|}
    \hline
    Thurston geometry ($X$) & Isotropy & dim(Isom($X$)) \\ 
    \hline
        $\R^4$, $\S^4$,$\H^4$ & SO(4) & 10  \\
         $\C \mathbf{P}^2$, $\C \mathbf{H}^2$ & U(2) & 9 \\ 
         $\S^3 \times \R$, $\H^3 \times \R$ & SO(3) &  7 \\ 
         $\S^2 \times \R^2$, $\S^2 \times \S^2$, $\H^2 \times \S^2$, $\H^2 \times \R^2$, $\H^2 \times \H^2$ & $\textnormal{SO}(2)\times \textnormal{SO}(2)$ & 6  \\
         $\Sol_0^4$, $\widetilde{\SL}(2, \R) \times \R$, $\Nil^3 \times \R$ & $\textnormal{SO}(2)$ & 5 \\ 
         $\textnormal{F}^4$ & $(\S^1)_{1,2}$ & 5 \\ 
         $\Nil^4$, $\Sol_1^4$ , $\Sol_{m,n}^4$ & \{1\} &  4 \\ 
    \hline
    \end{tabular}
    \caption{Four-dimensional Thurston geometries ordered by their isotropy and dimension of isometry group. Here, in $\textnormal{Sol}^4_{m,n}$, $m$ and $n$ are positive integers and when $m = n$, $\textnormal{Sol}^4_{m,n} \simeq \textnormal{Sol}^3 \times \R$. Moreover, $(\mathbb{S}^1)_{a,b}$ denotes the image of $\mathbb{S}^1$ in $\textnormal{U}(2) \subset \textnormal{SO}(4)$ by $z \mapsto (z^a, z^b)$. } \label{tab:Thurston_4d_geo}
\end{table}
Several four-dimensional geometries have already been investigated, particularly the real and complex space forms and products of lower-dimensional geometries (see \cite{DiazRamos}). The cases of $\mathbb{R}^3$, $\mathbb{H}^3$, and $\mathbb{S}^3$ follow from the classical results of Segre and Cartan on isoparametric hypersurfaces (see \cite{JSC,CeRyan}). The homogeneous hypersurfaces in $\mathbb{S}^3 \times \mathbb{R}$ and $\mathbb{H}^3 \times \mathbb{R}$ were studied in \cite{ManfioSantosVeken}; the cases of $\mathbb{C}\mathbf{P}^2$ and $\mathbb{C}\mathbf{H}^2$ were investigated in \cite{DIAZRAMOSComplexhyper,MiguelComplexPro}; and the geometries $\mathbb{S}^2 \times \mathbb{R}^2$, $\mathbb{H}^2 \times \mathbb{S}^2$, $\mathbb{H}^2 \times \mathbb{R}^2$, $\mathbb{S}^2 \times \mathbb{S}^2$, and $\mathbb{H}^2 \times \mathbb{H}^2$ were considered in \cite{joaojoao,Urbano,GaoMaYao}. Many of these works classify isoparametric hypersurfaces and, consequently, homogeneous hypersurfaces as well. Also, in a recent work \cite{dhaeneGuoxin} the authors classifying hypersurfaces with constant principal curvatures in $\textnormal{Sol}^4_0$ and, as consequence, they completely classify homogeneous hypersurfaces. Nevertheless, the approach developed in the present work is substantially different.

Four-manifolds are considerably less understood than their three-dimensional counterparts, which is reflected in the absence of a complete geometrization theorem in dimension four. Nevertheless, the study of four-dimensional Thurston geometries remains a natural and important direction of research. In this work, we focus specifically on the four-dimensional geometries whose full isometry group has dimension four, namely $\mathrm{Nil}^4$,  $\mathrm{Sol}_1^4$ and $\mathrm{Sol}_{m,n}^4$.

\begin{remark}
    Table \ref{tab:Thurston_4d_geo} is a consequence of \cite[Theorem~3.2]{Wall}. In fact, let $X$ be a maximal four dimensional Thurston geometry and $G_X$ its isometry group with identity component $G^o_X$. Denote by $K^o_X$ the isotropy group of $G^o_X$. Hence, $G_X$ with  will have stabilizer $K_X \subset O_4$ with identity component $K^o_X$. 
    
    In the space forms, it is well known that $K_X = O_4$. If $ X =  \mathbb{P}^2(\mathbb{C})$ or $ X = \mathbb{H}^2(\mathbb{C})$, then $K_X$ is equal to the normalizer, $\tilde{U}_2$ of $U_2$. 
    
    In general, if we have a Riemannian product $A \times B$ where $A$ and $B$ are irreducible and not isometric to each other, we nave $ \Isom (A \times B) = \Isom A \times \Isom B$. This solves the cases $\S^3 \times \R$, $\H^3 \times \R$, $\S^2 \times \R^2$, $\S^2 \times \S^2$, $\H^2 \times \S^2$, $\H^2 \times \R^2$, $\H^2 \times \H^2$,  $\textnormal{Nil}^3 \times  \mathbb{R}$, $\widetilde{\textnormal{SL}}(2,\mathbb{R}) \times \mathbb{R}$ and $\textnormal{Sol}^3 \times \mathbb{R}$.

    If $X = F^4$, any automorphism $\alpha$ of $G_X^o = \R^2 \ltimes \SL(2,\R)$ must leave $\R ^2$ and $\SL(2,\R)$ invariant. It turns out that there are only two cases for $\alpha$, and $G_X = \R^2 \ltimes \SL^\pm(2,\R)$, where the $\pm $ denotes that matrices may have determinant equals to $\pm 1$. If $ X = \Sol_0^4$, $G^o_X$ is an extension of $\R^3$ by $\R \times S O_2$ and since it acts faithfully, any automorphism $\alpha$ preserves $\R^3$ and is determined by its effect on $\R^3$. Hence, one deduces that $K_X \cong O_2 \times O_1$.
    
    In the remaining cases one calculates directly in the Lie algebra $\g{g}$. Remembering that for a compact automorphism group that acts semi-simply, any invariant subspace will have invariant complement, then it is possible to choose a basis, for the Lie algebra $\g{g}$, that fits this context in each case.  The proof is given by computing the possible automorphisms. Moreover, the last three cases all have $K_X^0$ trivial and in $ \Sol_{m,n}^4$, $\Sol^3 \times \R$ is a sub-case where there is an extra automorphism.
\end{remark}

This work is organized as follows. In Section \ref{Section_general_algebras}, we present the mathematical framework of the paper and we classify, up to conjugacy, three-dimensional subalgebras of the Lie algebras associated with four-dimensional Thurston geometries whose isometry groups are four-dimensional. As a consequence, in section \ref{Section_general_hypersur}, we describe, up to ambient isometries, the geometry of homogeneous hypersurfaces through the analysis of their shape operators.

\section{Codimension one subalgebras of 4-dimensional Lie algebras}\label{Section_general_algebras}

We begin by presenting a classification of left invariant metrics on the 4-dimensional, simply connected, unimodular Lie groups. Indeed, in \cite{Thuong}, they present such a classification, up to automorphism, and this classification is equivalent to a classification of the inner products on 4-dimensional unimodular Lie algebras. Hence, let $\tilde{\mathfrak{M}}$ denote the set of inner products on a Lie algebra $\mathfrak{g}$. There is a natural left action of $\textnormal{Aut}(\mathfrak{g})$ on $\tilde{\mathfrak{M}}$, considered in \cite{Kodama}. So, by computing $\textnormal{Aut}(\mathfrak{g})$ for each 4-dimensional unimodular Lie algebra, it is possible to describe the orbit space
$$ \mathfrak{M} = \textnormal{Aut}(\mathfrak{g})/\tilde{\mathfrak{M}}$$
If two inner products lie in the same $\textnormal{Aut}(\mathfrak{g})$ orbit, then they induce isometric left invariant metrics on the corresponding simply connected Lie group $G$. The converse holds when $\mathfrak{g}$ only has real roots (see \cite{Alekseevski}). Therefore, when $\mathfrak{g}$ has only real roots, $ \mathfrak{M}$ is simply the space of left invariant metrics up to isometry.  See Table \ref{tab:Metric_4d_geo} for dim($\mathfrak{M}$) in each case. Is also important to remark that, in general, the classification up to automorphism is finer than that up to isometry.

\begin{table}[H]
    \centering
    \begin{tabular}{|l|c|l|c|l|c|}
    \hline
    $G$ & dim($\mathfrak{M}$) & $G$ & dim($\mathfrak{M}$) & $G$ & dim($\mathfrak{M}$) \\ 
    \hline
        $\R^4$ & 0 & $\textnormal{Sol}^3 \times \R$ & 4 & $\widetilde{\textnormal{Nil}_3 \rtimes \S^1}$ & 5 \\
        $\textnormal{Nil}_3 \times \R$ & 1 & $\textnormal{Sol}'^4_0$ & 4 & SU(2,$\R$) $\times \R$ & 6 \\ 
        $\textnormal{Sol}_0^4$ & 2 & $\textnormal{Sol}_\mu^4$ & 4 & $\widetilde{\textnormal{Sl}(2, \mathbb{R})} \times \R$ & 6 \\
        $\textnormal{Nil}_4$ & 3 & $\widetilde{\textnormal{Isom}_0(\mathbb{R}^2)}\times \R$ & 4 & & \\ 
        $\textnormal{Sol}^4_{m,n}$ & 4 &  $\textnormal{Sol}^4_1$ & 5 &  &  \\
    \hline
    \end{tabular}
    \caption{Dimension of $\mathfrak{M}$ (see \cite{Thuong})}
    \label{tab:Metric_4d_geo}
\end{table}

\begin{remark}
    There are different classifications of the 4-dimensional Lie algebras in the literature. See \cite{Biggs} for a classification (simply connected or not) of the connected 4-dimensional Lie groups. Also, the 4-dimensional solvable Lie algebras are classified  in Theorem 1.5 of \cite{Andrada}. 
\end{remark}

Now, we focus on the 4-dimensional unimodular case. More precisely, we will concentrate in the 4-dimensional, simply connected, unimodular Lie groups with  with isometry group of dimension $4$.
\begin{proposition}[\cite{Thuong}]\label{Metric_4dim_metrics}
    Let $G$ be a $4$-dimensional, simply connected, unimodular Lie group whose isometry group is 4-dimensional. Then, in each of the cases below, one can construct an orthonormal basis $\{e_i\}_{i=1}^4$ such that every left-invariant metric on $G$ is isometric to one of the following.
    \begin{enumerate}
        \item If $G = \textnormal{Nil}^4$, then for $ d_1, d_{3} > 0, \ d_{2} \geq 0$, we have 
        \begin{equation}\label{theo_metrics_nil4}
            E_1 = d_1 e_1, \ E_2 = d_2 e_1 + d_3 e_2, \ E_3 = e_3, \ E_4 = e_4.
        \end{equation}
        \item If $G = \textnormal{Sol}^4_1$, then for $d_{1}, d_{5} > 0 , \ d_{2}, d_{4} \geq 0, \ d_{3} \in \mathbb{R}$, we have 
        \begin{equation}\label{theo_metrics_sol14}
            E_1 = d_{1} e_1, \ E_2 = d_{2} e_1 + e_2 , \ E_3 =  d_{3} e_1 + d_{4} e_2 + e_3 , \ E_4 = d_{5} e_4.
        \end{equation}
        \item If $G = \textnormal{Sol}^3_{m,n}$, then for $d_{4} > 0 , \ d_{1}, d_{3} \geq 0, \ d_{2} \in \mathbb{R}$, we have  
        \begin{equation}\label{theo_metrics_solmn}
            E_1 = e_1, \ E_2 = d_{1} e_1 + e_2 , \ E_3 =  d_{2} e_1 + d_{3} e_2 + e_3 , \ E_4 = d_{4} e_4.
        \end{equation}
    \end{enumerate}
\end{proposition}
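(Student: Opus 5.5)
Following the strategy sketched above via Kodama's action, I would proceed Lie algebra by Lie algebra. First I fix a basis $\{e_1,e_2,e_3,e_4\}$ of $\mathfrak{g}$ in which the brackets take their standard normal form:
\[
\mathfrak{nil}^4:\ [e_4,e_2]=e_1,\ [e_4,e_3]=e_2;
\]
\[
\mathfrak{sol}^4_1:\ [e_2,e_3]=e_1,\ [e_4,e_2]=e_2,\ [e_4,e_3]=-e_3;
\]
\[
\mathfrak{sol}^4_{m,n}:\ \mathrm{ad}(e_4)|_{\mathrm{span}\{e_1,e_2,e_3\}}=\mathrm{diag}(a,b,c),\quad a+b+c=0 .
\]
Next I compute $\textnormal{Aut}(\mathfrak{g})$ explicitly as a matrix group in this basis. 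The key structural inputs are:
\begin{itemize}
\item for $\mathfrak{nil}^4$, the lower central series $\langle e_1\rangle\subset\langle e_1,e_2\rangle\subset\langle e_1,e_2,e_3\rangle$ must be preserved;
\item for the two solvable cases, the nilradical $\mathfrak{n}=\langle e_1,e_2,e_3\rangle$ is preserved, and in the $\mathfrak{sol}^4_{m,n}$ case (distinct eigenvalues $a,b,c$) so is each eigenline modulo scaling of $e_4$;
\item in $\mathfrak{sol}^4_1$, the center $\langle e_1\rangle$ of $\mathfrak{n}$ is also preserved.
\end{itemize}
In each case $\textnormal{Aut}(\mathfrak{g})$ is then a group of block lower-triangular matrices with respect to a suitable ordering of the basis, together with a small finite group of sign or permutation symmetries. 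I would record its dimension, which should be $10-\dim\mathfrak{M}$ from Table~\ref{tab:Metric_4d_geo}.

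Given an arbitrary inner product $\langle\cdot,\cdot\rangle$ on $\mathfrak{g}$, I view it as a symmetric positive definite matrix $S$. The action $\varphi\cdot S=\varphi^{T}S\varphi$ corresponds to changing the basis $\{e_i\}$ by an automorphism. Equivalently, the Gram--Schmidt (Cholesky) factorization $S=L^TL$ with $L$ triangular identifies the space of inner products with the triangular group $T$ (the flag group). The task becomes describing the double coset $O(4)\backslash T\cdot\textnormal{Aut}(\mathfrak{g})$. Concretely, I apply Gram--Schmidt to the ordered basis adapted to the flag preserved by $\textnormal{Aut}(\mathfrak{g})$, which produces an orthonormal basis $\{E_i\}$ with $E_i$ triangular in the $e_j$. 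I then use the triangular part of $\textnormal{Aut}(\mathfrak{g})$ to kill as many entries of the triangular matrix as possible, and the diagonal (scaling) part to normalize diagonal entries to $1$. The remaining finite sign symmetries (for instance $e_1\mapsto -e_1$ combined with appropriate sign changes preserving the brackets) are used to force the off-diagonal parameters to be nonnegative, giving $d_2\ge0$, $d_4\ge0$, etc. Finally I read off the formulas \eqref{theo_metrics_nil4}, \eqref{theo_metrics_sol14} and \eqref{theo_metrics_solmn}, and check that the number of free parameters matches $\dim\mathfrak{M}$. Since isometric classes are at most as fine as automorphism classes, Kodama's observation gives that every metric is isometric to one on the list.

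The main obstacle will be the normalization step. One must track exactly which entries of the Cholesky factor can be removed by automorphisms, because the automorphism group is not the full triangular group and the scalings are coupled through the bracket relations. For example, in $\mathfrak{nil}^4$, scaling $e_4\mapsto\lambda e_4$ and $e_3\mapsto\mu e_3$ forces $e_2\mapsto\lambda\mu e_2+\cdots$ and $e_1\mapsto\lambda^2\mu e_1+\cdots$. This is why only $E_3,E_4$ become unit vectors and the parameters $d_1,d_2,d_3$ remain. I would handle this by writing a general automorphism with unknowns, imposing the bracket equations, and then solving for the entries orthogonalization produces, one column at a time from $e_4$ downward. I would also verify, via the sign automorphisms, the stated sign conditions on each parameter.
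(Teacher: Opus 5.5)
Your plan is correct and is essentially the argument the paper relies on. The paper gives no proof of its own and quotes the result from Van Thuong, whose method is the one you describe: compute $\textnormal{Aut}(\mathfrak g)$, take a triangular (Gram--Schmidt) representative of each inner product, and reduce it by the $\textnormal{Aut}(\mathfrak g)$-action in the Kodama--Takahara--Tamaru framework.

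One small slip: in $\mathfrak{nil}^4$ the lower central series is only $\langle e_1,e_2\rangle\supset\langle e_1\rangle$, so $\langle e_1,e_2,e_3\rangle$ is not one of its terms. It is still characteristic, being the centralizer of $[\mathfrak g,\mathfrak g]$, so your flag argument goes through unchanged.
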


In the next result, we will classify codimension one subalgebras of $\g{g}$ up to conjugacy and isometric automorphisms.  Our explicit description is in terms of an orthonormal basis of $\g{g}$ will allow us to compute the geometry of the orbits and determine whether the actions of two subgroups are orbit equivalent. Thus, let $\mathfrak{g} = \textnormal{span}\{ E_1, E_2, E_3, E_4 \}$ be a Lie algebra of $G$ and $\mathfrak{h}$ a subalgebra given by 
$$ \mathfrak{h} = \textnormal{span} \{ A , B , C \}, \ \textnormal{ where }  A = \sum_{j=1}^4 a_{j} E_j, \ \ B = \sum_{j=1}^4 b_{j} E_j, \ \ C = \sum_{j=1}^4 c_{j} E_j.  $$
Hence
\begin{theorem}\label{theo_algebras}
    Let $G$ be a 4-dimensional simply-connected unimodular Lie group with 4-dimensional isometry group and $\mathfrak{g}$ its  Lie algebra. Then $\mathfrak{h}$ is a tridimensional subalgebra of $\mathfrak{g}$ if and only if
    \begin{enumerate}
        \item If $G = \textnormal{Nil}_4$ then $ \mathfrak{h} = \textnormal{span}\{ E_1, E_2, a  E_3 + b E_4 \},$ where $a, b \in \mathbb{R}$ cannot vanish simultaneously. 
        
        \item If $G = \textnormal{Sol}_1^4$ then 
        \begin{enumerate}
            \item $ \mathfrak{h}_0 = \textnormal{span}\{ E_1, E_2, E_3 \}. $
            \item $ \mathfrak{h}_1 = \textnormal{span}\{ E_1, E_2, E_4 \}. $
            \item $ \mathfrak{h}_2 = \textnormal{span} \bigg\{ E_1, E_4, \dfrac{1}{\sqrt{d_4^2 + 1 }} (d_4 E_2 - E_3)  \bigg\} $. 
        \end{enumerate}
        with $d_4 \in \mathbb{R}$, $d_4 \geq 0 $. is the coefficient of the metric.
    
        \item If $G = \textnormal{Sol}_{m,n}^4$ then 
        \begin{enumerate}
            \item $ \mathfrak{h}_0 = \textnormal{span}\{ E_1, E_2, E_3 \}. $
            \item $ \mathfrak{h}_1 = \textnormal{span}\{ E_1, E_2, E_4 \}. $
            \item $ \mathfrak{h}_2 = \textnormal{span}\{ E_1 , d_3 E_2 - E_3 ,  E_4  \}, $.
            \item $ \mathfrak{h}_3 = \textnormal{span}\{  d_1 E_1 - E_2 , d_2 E_1 - E_3 , E_4 \}$.
        \end{enumerate}
    \end{enumerate}
    where $d_1, d_2, d_3, d_4 \in \mathbb{R}$ are the coefficients of the metrics given by Proposition \ref{Metric_4dim_metrics}.
\end{theorem}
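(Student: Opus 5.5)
The proof reduces to elementary Lie algebra: every codimension-one subalgebra of a solvable algebra contains the derived algebra, and then we read off which hyperplanes are subalgebras. We work throughout in the reference basis $\{e_i\}$ in which the brackets are standard. For $\mathfrak{nil}^4$ these are $[e_4,e_1]=e_2$, $[e_4,e_2]=e_3$. For $\mathfrak{sol}^4_{m,n}$ they are $[e_4,e_i]=\lambda_i e_i$, with distinct nonzero $\lambda_i$ summing to zero. For $\mathfrak{sol}^4_1$ they are $[e_4,e_1]=e_1$, $[e_4,e_2]=-e_2$, $[e_2,e_3]=e_1$, up to the paper's normalization. We then transport the answer to the orthonormal frame $\{E_i\}$ of Proposition~\ref{Metric_4dim_metrics}.

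The key structural step is the following. If $\mathfrak h\subset\mathfrak g$ has codimension one, then $\mathfrak h$ contains the codimension-one ideal or contains $[\mathfrak g,\mathfrak g]$. To see this, write $\mathfrak h=\ker\varphi$ for some $\varphi\in\mathfrak g^*$. The condition that $\mathfrak h$ is a subalgebra means $\varphi([X,Y])=0$ for all $X,Y\in\ker\varphi$. Equivalently, $d\varphi=\varphi\wedge\psi$ for some $\psi$.

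We then run through the cases.

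\textbf{Nilpotent case.} For $\mathfrak{nil}^4$, Engel's theorem shows every codimension-one subalgebra is an ideal, so it contains $[\mathfrak g,\mathfrak g]=\mathrm{span}\{e_2,e_3\}$. The ideals containing it are exactly the hyperplanes through $\mathrm{span}\{e_2,e_3\}$, a projective line of choices.

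\textbf{Solvable cases.} For the Sol algebras we write $\varphi=\sum\alpha_i e^i$ and impose the condition directly.
\begin{itemize}
\item In $\mathfrak{sol}^4_{m,n}$ we have $d e^i=-\lambda_i e^4\wedge e^i$ for $i\le 3$ and $de^4=0$. So $d\varphi=-e^4\wedge\sum\lambda_i\alpha_i e^i$. This is divisible by $\varphi$ if and only if either $\varphi\propto e^4$, or the vector $(\lambda_i\alpha_i)$ is proportional to $(\alpha_i)$ modulo $e^4$. Since the $\lambda_i$ are distinct, the second alternative forces at most one of $\alpha_1,\alpha_2,\alpha_3$ to be nonzero. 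This gives the four subalgebras $\mathrm{span}\{e_1,e_2,e_3\}$ and $\mathrm{span}\{e_4,e_j,e_k\}$.
\item In $\mathfrak{sol}^4_1$ the same computation, now with the extra term $de^1\ni -e^2\wedge e^3$, kills $\alpha_1$ unless $\varphi\propto e^4$. It leaves $\mathrm{span}\{e_1,e_2,e_3\}$, $\mathrm{span}\{e_1,e_2,e_4\}$ and $\mathrm{span}\{e_1,e_3,e_4\}$. A continuous family is possible a priori here and must be checked carefully.
\end{itemize}

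\textbf{Transfer to the orthonormal frame.} Using the triangular change of basis of Proposition~\ref{Metric_4dim_metrics}, we invert to express each $e_i$ in terms of the $E_j$. For example, in $\mathrm{Sol}_{m,n}$ we have $e_1=E_1$, $e_2=E_2-d_1E_1$, and $e_3=E_3-d_3E_2+(d_1d_3-d_2)E_1$. Each subalgebra above is then rewritten in terms of the $E_j$, and we simplify spanning vectors by subtracting multiples of the others. This produces the forms $\mathfrak h_0,\dots,\mathfrak h_3$, for instance $\mathrm{span}\{e_1,e_3,e_4\}=\mathrm{span}\{E_1,d_3E_2-E_3,E_4\}$. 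For $\mathrm{Sol}_1$ the vector $d_4E_2-E_3$ is then normalized.

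\textbf{Identifications and the converse.} Subalgebras that are conjugate under inner automorphisms or isometric automorphisms should be identified. This is where the lists for $\mathrm{Sol}^4_1$ and $\mathrm{Nil}^4$ might collapse further; note the statement keeps the full family in the $\mathrm{Nil}^4$ case. The converse direction, that each listed span is a subalgebra, is a direct bracket check.

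The main obstacle I expect is bookkeeping. We must get the exact structure constants of \cite{Thuong}, in particular the eigenvalue pattern of $\mathrm{Sol}_1$, and the inverse change of basis right, so that the final expressions match the stated normal forms. The conceptual step via $d\varphi=\varphi\wedge\psi$ is routine.
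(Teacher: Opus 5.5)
Your working criterion ($\ker\varphi$ is a subalgebra iff $\varphi\wedge d\varphi=0$), applied in the algebraic basis and then transported to $\{E_i\}$, is a legitimate and more conceptual alternative to the paper's route. The paper instead row-reduces a spanning triple in the orthonormal frame into pivot patterns, solves the closure equations there, and then conjugates with $e^{\mathrm{ad}(X)}$. Two steps of your argument fail as written.

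\textbf{The opening structural claim is false.} In both Sol cases $[\mathfrak{g},\mathfrak{g}]=\mathrm{span}\{e_1,e_2,e_3\}$ is the only codimension-one ideal, and $\mathrm{span}\{e_1,e_2,e_4\}$ is a subalgebra not containing it. A codimension-one subalgebra containing $[\mathfrak{g},\mathfrak{g}]$ is an ideal, and that is forced only in the nilpotent case, via your Engel argument.

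\textbf{The form computation does not give finite lists.} Since $de^4=0$, $d\varphi$ does not involve $\alpha_4$. So in $\mathfrak{sol}^4_{m,n}$ every $\varphi=\alpha_ie^i+\alpha_4e^4$ passes the test, and $\mathrm{span}\{e_j,e_k,e_4+ae_i\}$ is a subalgebra for every $a\in\mathbb{R}$. Likewise $\mathfrak{sol}^4_1$ has the pencils $\mathrm{span}\{e_1,e_2,e_4+ae_3\}$ and $\mathrm{span}\{e_1,e_3,e_4+ae_2\}$. The list in the statement is therefore exhaustive only up to conjugacy, and the missing step is exactly the normalization the paper performs. The map $e^{\mathrm{ad}(ce_i)}$ fixes $e_j,e_k$ and sends $e_4+ae_i$ to $e_4+(a-c\lambda_i)e_i$, so $c=a/\lambda_i$ removes $a$. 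In $\mathfrak{sol}^4_1$, $e^{\mathrm{ad}(ce_3)}$ (resp.\ $e^{\mathrm{ad}(ce_2)}$) does the same while moving $e_2$ (resp.\ $e_3$) only by a multiple of $e_1$. Your identification paragraph has this backwards: conjugacy is not what might collapse the list further; it is what reduces the solution pencils to the listed representatives at all.

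\textbf{The structure constants must be Thuong's exactly.} The normal forms of Proposition~\ref{Metric_4dim_metrics} are written in his basis, so "up to normalization" is not enough.
For $\mathfrak{nil}_4$ that basis has $[e_2,e_4]=e_1$, $[e_3,e_4]=e_2$. Then $[\mathfrak{g},\mathfrak{g}]=\mathrm{span}\{e_1,e_2\}=\mathrm{span}\{E_1,E_2\}$, and the stated family follows immediately. With your labeling the derived algebra is $\mathrm{span}\{e_2,e_3\}\neq\mathrm{span}\{E_1,E_2\}$, and the transfer would produce the wrong hyperplanes.
Your $\mathfrak{sol}^4_1$ brackets are inconsistent. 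Applying the derivation $\mathrm{ad}(e_4)$ to $[e_2,e_3]=e_1$, with $[e_4,e_1]=e_1$ and $[e_4,e_2]=-e_2$, forces $[e_4,e_3]$ to have $e_3$-coefficient $2$. Hence $\operatorname{tr}\mathrm{ad}(e_4)=2\neq0$, so the algebra is not unimodular.
In the actual algebra, $[e_2,e_3]=e_1$, $[e_2,e_4]=-e_2$, $[e_3,e_4]=e_3$, so the central $e_1$ has $\mathrm{ad}(e_4)$-eigenvalue $0$. Writing $e^{ij\cdots}=e^i\wedge e^j\wedge\cdots$, one gets $d\varphi=-\alpha_1e^{23}+\alpha_2e^{24}-\alpha_3e^{34}$ and $\varphi\wedge d\varphi=-\alpha_1^2e^{123}+\alpha_1\alpha_2e^{124}-\alpha_1\alpha_3e^{134}-(2\alpha_2\alpha_3+\alpha_1\alpha_4)e^{234}$. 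This vanishes iff $\alpha_1=0=\alpha_2\alpha_3$. After the conjugation above it yields $\mathfrak{h}_0$, $\mathfrak{h}_1$ and $\mathrm{span}\{e_1,e_3,e_4\}=\mathrm{span}\{E_1,d_4E_2-E_3,E_4\}=\mathfrak{h}_2$.

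Your inversion of the frame for $\mathrm{Sol}^4_{m,n}$ is correct. It gives $\mathrm{span}\{e_2,e_3,e_4\}=\mathrm{span}\{d_1E_1-E_2,d_2E_1-E_3,E_4\}=\mathfrak{h}_3$, so with these repairs your route proves the theorem. What the paper's brute-force version buys is generators written directly in the orthonormal frame, which it reuses for the shape operators.
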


In other to prove Theorem \ref{theo_algebras}, we procedure as follows: for each case we first construct a orthonormal frame to the Lie algebra $\mathfrak{g}$ that relies on the metrics described in Theorem \ref{Metric_4dim_metrics} and then use this  frame to describe the Lie bracket of the Lie algebra $\mathfrak{g}$ in therms of this orthonormal basis. Then  compute the subalgebras $\mathfrak{h}$ of $\mathfrak{g}$ with this configuration. Later, if necessary, we use the well known equation 
\begin{equation}\label{eq_conjug}
    e^{\textnormal{ad}(X)}Y := \sum_{n=0}^\infty \frac{\textnormal{ad}(X)}{n!} (Y) = Y + [X,Y] + \frac{[X,[X,Y]]}{2} + \dots
\end{equation}
in order to describe the subalgebras up to conjugacy.
The method used here to compute the subalgebras is a simple linear algebra concept, that is, let $A = \Sigma_{i=0}^4 a_i E_i$, $B = \Sigma_{i=0}^4 b_i E_i$ and $C = \Sigma_{i=0}^4 c_i E_i$, where $(E_i)_{i}$ is an orthonormal frame of $\mathfrak{g}$ for some metric of Theorem \ref{Metric_4dim_metrics}. Hence
\begin{enumerate}
    \item  If $a_4 = b_4 = c_4 = 0$ and as $\mathfrak{h} = \textnormal{span}\{A, B, C\}$ has dimension three, then is clear that if $\mathfrak{h}$ is subalgebra of $\mathfrak{g}$. Thus $\mathfrak{h} = \textnormal{span}\{E_1, E_2, E_3\}$. 
    \item Supposing that $a_4 \ne 0 $, then due to row reduction methods, we can suppose that $b_4 = c_4 = 0$, and proceed to the following cases
    \begin{enumerate}
        \item If $b_3 = c_3 = 0$ then again by same arguments we can suppose that $\mathfrak{h}$ is of the form $\mathfrak{h} = \textnormal{span}\{E_1, E_2, a_3 E_3 + a_4 E_4\}$.
        \item If $b_3 \ne 0$ then we may suppose $c_3 = 0$, and 
        \begin{enumerate}
            \item If $ c_2 = 0 $, then  $\mathfrak{h} = \textnormal{span}\{E_1, b_2 E_2 + b_3 E_3 , a_2 E_2 + a_4 E_4\}$.
            \item If  $c_2 \ne 0$, then  $\mathfrak{h} = \textnormal{span}\{c_1E_1 + c_2 E_2, b_1 E_1 + b_3 E_3 , a_1 E_1 + a_4 E_4\}$.
        \end{enumerate}
    \end{enumerate}
\end{enumerate} 
Approaching this cases is enough to  comprehend all possible 3-dimensional subalgebras $\mathfrak{h}$ of $\mathfrak{g}$.

Following \cite{Thuong}, the nilpotent Lie algebra $\mathfrak{nil}_4$ has a basis $(e_i)$ such that 
    \begin{equation} \label{eq_lie_braket_nil4_basic}
    \begin{array}{lll}
         \left[ e_1,  e_2 \right] = 0, &   [e_1 , e_3] =  0, &  \left[e_1 , e_4\right] = 0  \\
         \left[ e_2 , e_3\right] = 0, & [e_2 , e_4] = e_1 , & [e_3 , e_4] = e_2   
    \end{array}
    \end{equation}
For the solvable Lie algebra $\mathfrak{sol}_1^4$, there is a basis $(e_i)$ such that 
\begin{equation}\label{eq_braket_sol14}
    \begin{array}{llllll}
    \left[ e_1,  e_2 \right] =  0 , &  [e_1 , e_3] = 0, &  [e_1 , e_4] = 0, \\
    \left[e_2 , e_3\right] = e_1, & [e_2 , e_4] = -e_2 , & [e_3 , e_4 ] = e_3.
    \end{array}
\end{equation}

The next definition is guaranteed by \cite{Filipkiewicz}, \cite{Lee} and \cite{Wall}. 
\begin{definition}
    The 4 dimensional Lie group $\textnormal{Sol}_\lambda^4$ can be described as $\textnormal{Sol}_{m,n}^4$ where $m,n$ are two integers such that the equation $x^3 - m x^2 + nx - 1 = 0$ has $3$ distinct positive real roots $\alpha_1 > \alpha_2 > \alpha_3 $, with $\lambda = \frac{\ln \alpha_1}{\ln \alpha_2}$. There are only countable many such $\lambda$'s. Moreover $G = \textnormal{Sol}_\lambda^4 = \mathbb{R}^3 \rtimes_\varphi \mathbb{R}$, where 
    $$ \varphi(s) = 
    \begin{bmatrix}
    e^{\lambda s}& 0 & 0 \\ 
    0 & e^{s} & 0 \\
    0 & 0 & e^{-(1 + \lambda) s}
    \end{bmatrix}, \ \ \lambda > 1. $$
\end{definition}
The solvable Lie algebra $\mathfrak{g}$ of $G = \textnormal{Sol}_\lambda^4 = \textnormal{Sol}_{m,n}$ and $(e_i) $ be a basis for $\mathfrak{g}$, we have 
\begin{equation}\label{eq_braket_solmn_basic}
    \begin{array}{llll}
        \left[e_1, e_2\right] = 0, & [e_1, e_3] = 0, & [e_1,e_4] = - \lambda e_1, \\   
        \left[e_2, e_3\right] = 0, & \left[e_2,e_4\right] = - e_2, &  [e_3,e_4] =(\lambda + 1) e_3.
    \end{array}
\end{equation}

\begin{proof}[Proof of Theorem \ref{theo_algebras}] Let $\mathfrak{g}$ be one of the Lie algebras $\mathfrak{nil}_4$, $\mathfrak{sol}_1^4$ or $\mathfrak{sol}_{m,n}^4$. We classify the tridimensional subalgebras $\g{h}$ of $\g{g}$ in separate cases.  
\begin{enumerate}
    \item[$\mathfrak{nil}_4 $]
    
    We begin by rewriting \eqref{eq_lie_braket_nil4_basic} using \eqref{theo_metrics_nil4} as 
    $$ \begin{array}{l}
            [E_2 , E_4] =  p E_1  , \ \ [E_3 , E_4] = q E_2  + r E_1 \\ \\   
            \big[ E_1,  E_2 \big] = [E_1 , E_3] =  [E_1 , E_4] = [E_2,E_3] = 0
        \end{array}, \ \ \ p = \frac{d_3}{d_1}, \ q = \frac{1}{d_3}, \ r = - \frac{d_2}{d_1 d_3}$$
    Let $A = \Sigma_{i=0}^4 a_i E_i$, $B = \Sigma_{i=0}^4 b_i E_i$ and $C = \Sigma_{i=0}^4 c_i E_i$ and suppose initially that $a_4 = b_4 = c_4 = 0 $. Hence $\mathfrak{h} = \textnormal{span}\{ E_1, E_2, E_3\}$, and
        $$ [A,B] =  [A,C] =  [B,C] = 0 $$
    that is trivially a sub algebra. 
    
    Now suppose that $a_4 \ne 0  $ and $b_4 = c_4 = 0 $. Suppose also that $b_3 = c_3 = 0 $. So we may suppose that $A = a_3 E_3 + a_4 E_4  $ and $\mathfrak{h} = \textnormal{span}\{ E_1 , E_2, A \} $, thus  
    $$ [E_1,E_2] =  0, \ \ \ [E_1,A] =  0 , \ \ \  [E_2,A] =  a_4 p E_1  $$
    that is clearly a sub algebra.

    Now, supposing $a_4 \ne 0 $, $b_3 \ne 0$, $c_1 \ne 0$ and  $b_4 = c_4 = c_3 = c_2 = 0 $, we set $A = a_2 E_2 + a_4 E_4$, $B = b_2 E_2 + b_3 E_3$ and $C = E_1$. Thus
    $$ \big[ A , B \big] = - a_4 ( b_3 r  + b_2 p ) E_1  - a_4 b_3 q E_2, \ \ \ \big[ A , C \big] = 0, \ \ \ \big[ B , C \big] = 0.$$
    Since $a_4 \ne 0$ and $b_3 \ne 0$ by hypothesis, this is never a subalgebra.
    
    Now we suppose that $a_4 \ne 0 $, $b_3 \ne 0$, $c_2 \ne 0$ and  $b_4 = c_4 = c_3 =  0 $. We may set $A = a_1 E_1 + a_4 E_4$, $B = b_1 E_1 + b_3 E_3$ and $C = c_1 E_1 + c_2 E_2$. Then 
      $$ [A,B] = - a_4 b_3 ( r  E_1 + q E_2 ), \ \ \  [A,C] =  - a_4 c_2 p E_1 , \ \ \  [B,C] =  0  $$
    Since $p >0 $ , $a_4 \ne 0$ and $c_2 \ne 0 $, this is never a subalgebra.

    \item[$\mathfrak{sol}^4_1 $] 
    
    Using equations \eqref{theo_metrics_sol14} and \eqref{eq_braket_sol14} we obtain  
    $$ \begin{array}{l}
        \big[ E_1,  E_2 \big] = [E_1 , E_3] =  [E_1 , E_4] = 0 , \\ \\ 
        \big[ E_2 , E_3 \big] = p E_1  , \ \ [E_2 , E_4] =  q E_1 + r E_2 , \ \ \big[E_3 , E_4\big] =  s E_1 + t E_2 - r E_3 , 
    \end{array}$$
    where 
    $$ p = \frac{1}{d_1}, \ \ q = \frac{d_5 d_2}{d_1}, \ \ r = - d_5, \ \ s = (2d_4d_2 - d_3)\dfrac{d_5}{d_1} , \ \ t = -2 d_4 d_5 $$
    
    Let $A = \Sigma_{i=0}^4 a_i E_i$, $B = \Sigma_{i=0}^4 b_i E_i$ and $C = \Sigma_{i=0}^4 c_i E_i$ and suppose initially that $a_4 = b_4 = c_4 = 0 $. We have $\mathfrak{h}_0 = \textnormal{span}\{ E_1, E_2, E_3\}$ and 
    $$ [E_1,E_2] = [E_1,E_3] = 0 , \ \  [E_2,E_3] = p E_1$$
    that is clearly a sub algebra.
    
    Now suppose that $a_4 \ne 0  $ and $b_4 = c_4 = b_3 = c_3 = 0 $. We may set $A = a_3 E_3 + a_4 E_4  $ and $\mathfrak{h} = \textnormal{span}\{ E_1 , E_2, A \} $. Thus 
    $$ [E_1,E_2] =  0, \ \ \ [E_1,A] =  0 , \ \ \  [E_2,A] = (a_3 p + a_4 q ) E_1 + a_4 r E_2 $$
    that again, is clearly a  subalgebra.

    Now, suppose that $a_4 \ne 0 $, $b_3 \ne 0$, $c_1 \ne 0$ and  $b_4 = c_4 = c_3 = c_2 = 0 $. This way, set $A = a_2 E_2 + a_4 E_4$, $B = b_2 E_2 + b_3 E_3$ and $C = E_1$. Then
    $$ \begin{array}{l}
        \big[ A , B \big] = (a_2 b_3 p - a_4 b_2 q - a_4 b_3 s ) E_1 - (a_4 b_2 r  + a_4 b_3 t ) E_2 + a_4 b_3 r E_3     \\
        \big[ A , C \big] = 0, \ \ \ \big[ B , C \big] = 0   \\ 
    \end{array}$$
    this is a subalgebra if exists $ \alpha \in \mathbb{R}$ such that   
    $$ \alpha (b_2 E_2 + b_3 E_3)= - (a_4 b_2 r  + a_4 b_3 t ) E_2 + a_4 b_3 r E_3 $$
    as  $a_4 \ne 0$ and $b_3 \ne 0$ by hypothesis, we have $\alpha = a_4 r $ and as $r \ne 0 $ we have  
    $$   b_2 = - b_3 \frac{t}{2r} = - b_3 d_4.$$
    
    Now we suppose that $a_4 \ne 0 $, $b_3 \ne 0$ and  $b_4 = c_4 = c_3 =  0 $. We set $A = a_1 E_1 + a_4 E_4$, $B = b_1 E_1 + b_3 E_3$ and $C = c_1 E_1 + c_2 E_2$. Then
    $$  \big[ A , B \big] = - a_4 b_3 (s E_1 + t E_2 - r E_3), \ \ \ \big[ A , C \big] = - a_4 c_2 (q E_1 + r E_2),  \ \ \  \big[ B , C \big] = - b_3 c_2 p E_1 $$
    As  $p >0 $ , $b_3 \ne 0$ and $c_2 \ne 0$, which implies that this is never a subalgebra.

    Now we remember equation \eqref{eq_conjug}. For the algebra  $ \mathfrak{h} = \textnormal{span}\{ E_1, E_2, a  E_3 + b E_4 \} $, if $b = 0$ then subalgebra is $\mathfrak{h}_0 = \textnormal{span}\{ E_1, E_2,  E_3 \} $. Suppose then that $b \ne 0$. As $b \ne 0$ and $r \ne 0$, we choose $\psi = a/br$ and we have that $ \mathfrak{h}_1 = e^{\textnormal{ad}(\psi E_3)} \mathfrak{h} = \textnormal{span} \{ E_1 , E_2 , E_4 \} $.
    
    Now for $ \mathfrak{h} = \textnormal{span} \{ E_1,  d_4 E_2 - E_3 , a E_3 + b E_4 \} $. If $b = 0$ then this algebra is conjugated to $\mathfrak{h}_0 = \textnormal{span} \{E_1, E_2, E_3\}$. Thus, suppose that $b \ne 0$. Since $b \ne 0$ and $r \ne 0$, we choose $\psi = a/br$, and we have that 
    $$ \tilde{\mathfrak{h}} = e^{\textnormal{ad}(\psi E_3)} \mathfrak{h} = \textnormal{span} \left\{E_1, d_4 E_2 - E_3 ,  \frac{at}{r} E_2 + b E_4 \right\}.$$
    We then choose $\phi = - at/br^2$, and $ e^{\textnormal{ad}(\phi E_2)}\tilde{\mathfrak{h}} =  \textnormal{span} \left\{E_1, d_4 E_2 - E_3 , E_4 \right\}$.
    Thus 
    $$\mathfrak{h}_2 = e^{\textnormal{ad}(\phi E_2)}\left(e^{\textnormal{ad}(f E_3)} \mathfrak{h}\right) = \textnormal{span} \left\{ E_1, E_4, \dfrac{1}{\sqrt{d_4^2 + 1 }} (d_4 E_2 -  E_3)  \right\}.$$
    \item[$\mathfrak{sol}^4_{m,n} $] 
        Using equations \eqref{theo_metrics_solmn} and \eqref{eq_braket_solmn_basic}, we have 
        \begin{equation}\label{eqcolcheteSol4mn}
        \begin{array}{l}
            \big[ E_1,  E_2 \big] = [E_1 , E_3] = [E_2,E_3] = 0 \\ 
            \big[ E_1 , E_4 \big] = p E_1 , \ \ [E_2 , E_4] = q E_1 + r E_2 , \ \ \big[E_3 , E_4\big] =  s E_1 + t E_2 + u E_3  
        \end{array}
        \end{equation}
        where  
        $$\begin{array}{lll}
             p = - \lambda d_4, &  q = -d_4 d_1 (\lambda-1),  & r = - d_4 , \\ 
             s = d_4[d_1d_3(\lambda + 2 ) - d_2 (2 \lambda + 1) ] ,   &  t = -d_4d_3(\lambda + 2) , &  u = d_4(\lambda + 1).
        \end{array} $$
        
        Now, let $A = \Sigma_{i=0}^4 \ a_i E_i $, $B = \Sigma_{i=0}^4 \ b_i E_i $ and $C = \Sigma_{i=0}^4 \ c_i E_i $. Suppose initially that $a_4 = b_4 = c_4 = 0 $. We have $\mathfrak{h}_0 = \textnormal{span}\{ E_1, E_2, E_3\}$ and from the equation \eqref{eqcolcheteSol4mn} we have 
        $$ [E_1,E_2] = [E_1,E_3] =  [E_2,E_3] = 0 $$ 
        that is clearly a sub algebra.
            
        Now suppose that $a_4 \ne 0  $, $b_4 = c_4 = 0 $ and also that $b_3 = c_3 = 0 $. We set $A = a_3 E_3 + a_4 E_4  $ and $\mathfrak{h}_1= \textnormal{span}\{ E_1 , E_2, A \} $. Then  
        $$ [E_1,E_2] =  0, \ \ \ [E_1,A] =  p a_4 E_1, \ \ \  [E_2,A] = a_4 (q E_1 + r E_2) $$
        that is clearly a sub algebra.
        
        Now, suppose that $a_4 \ne 0 $, $b_3 \ne 0$ $c_1 \ne 0 $ and  $b_4 = c_4 = c_3 = c_2 = 0 $. We set $ A =  a_2 E_2 + a_4 E_4$, $B = b_2 E_2 + b_3 E_3 $ and $ C = E_1 $. Then, we have 
        $$ [ A , B \big] = a_4[ ( b_2 q +  b_3 s) E_1 + ( b_2 r + b_3 t )E_2 +  b_3 u E_3], \ \  [ A , C ] =  a_4 c_1 p E_1 ,  \ \ [ B , C  ] = 0  $$
        this is a sub-algebra if and only if there exists $\alpha $  such that 
        $$ \alpha (b_2 E_2 + b_3 E_3)  =  ( b_2 r + b_3 t )E_2 +  b_3 u E_3 $$
        As $b_3 \ne 0 $ by hypothesis, we have $\alpha = u$ and  $  b_2 ( u - r ) =  b_3 t   $. Also, as $\lambda > 1 $, then 
        $$  b_2  =  - b_3 \frac{ d_3 ( \lambda + 2 )}{( \lambda + 2 )} = - b_3 d_3   $$ 
        thus we have $ \mathfrak{h} = \textnormal{span}\{E_1, d_3 E_2 - E_3 , a_2 E_2 + a_4 E_4   \} $.
        
        Now, suppose that $a_4 \ne 0 $, $b_3 \ne 0$ and  $b_4 = c_4 = c_3 =  0 $ and $c_2 \ne 0 $. We set $ A = a_1 E_1 + a_4 E_4 $, $B = b_1 E_1 + b_3 E_3$ and $C = c_1 E_1 + c_2 E_2  $. Thus   
        $$ \begin{array}{l}
            \big[ A , B \big] = a_4 [ (b_1 p + b_3 s  ) E_1 +  b_3 t  E_2 + b_3 u E_3 ]   \\
            \big[ A , C \big] =  a_4 [ (c_1 p + c_2 q ) E_1 + c_2 r E_2  ] \\  
            \big[ B , C \big] = 0   \\ 
        \end{array}$$
        this is a sub-algebra if there exists $\alpha $, $\beta$ and $\delta$ such that 
        $$\begin{array}{l}
            \alpha ( b_1 E_1 + b_3 E_3) + \beta (c_1 E_1 + c_2 E_2) = (b_1 p + b_3 s  ) E_1 +  b_3 t  E_2 + b_3 u E_3 \\
             \delta (c_1 E_1 + c_2 E_2) = (c_1 p + c_2 q ) E_1 + c_2 r E_2
        \end{array} $$
        From the second equation, as $c_2 \ne 0$, we have $\delta = r $ and then $ c_1 (r - p) = c_2 q $, that is 
        $$ c_1 (\lambda - 1) = - c_2  d_1 (\lambda - 1)$$
        as $\lambda > 1$  then $ c_1 = - c_2 d_1 $. From the first equation , as $b_3 \ne 0$ and $c_2 \ne 0$, we get $\alpha = u$ and $ \beta = b_3t/c_2 $ and the equation becomes 
        $$ u b_1 + \frac{b_3t}{c_2} c_1   = b_1 p + b_3 s  $$
        that is $ b_1 (2 \lambda + 1) = - d_2 b_3 (2\lambda + 1) ] $. As $\lambda > 1 $, then $ b_1  =  -d_2 b_3  $. Thus we have  
        $\mathfrak{h} = \textnormal{span}\{ a_2 E_1 + a_4 E_4 ,d_2 E_1 - E_3 , d_1 E_1 - E_2 \}.$

        Consider now $\mathfrak{h} = \textnormal{span}\{ E_1, E_2, a_3 E_3 + a_4 E_4 \}$. If $a_3 = 0$ there is nothing to do. Suppose then $a_3 \ne 0 $. As by hypothesis we have  $a_4 \ne 0$ and $u = d_4 (\lambda + 1 ) \ne 0$. Choosing $\phi = - a_3/a_4u$, we have $e^{\textnormal{ad}(\phi E_3)} \mathfrak{h}$ can be written as $\mathfrak{h}_1 = \left\{ E_1, E_2, E_4 \right\}$.
        
        For $ \mathfrak{h} = \textnormal{span}\{ E_1 , d_3 E_2 - E_3 , a_2 E_2 + a_4 E_4  \}$. If $a_2 = 0$ there is nothing to do. Suppose then $a_2 \ne 0 $, as $a_4 \ne 0$ and $r \ne 0$ by hypothesis, we choose $\phi = - a_2/a_4r$ and 
        $$ e^{\textnormal{ad}(t E_2)} \mathfrak{h} = \textnormal{span} \left\{ E_1 , C E_2 + E_3 , - a_2 d_1(\lambda - 1) E_1 + a_4 E_4  \right\}, $$
        that can be written as $\mathfrak{h}_2 = \textnormal{span} \left\{ E_1 , C E_2 + E_3 , E_4  \right\} $.
        
        For  $ \mathfrak{h} = \textnormal{span}\{  d_1 E_1 - E_2 , d_2 E_1 - E_3 , a_1 E_1 + a_4 E_4 \}$.  Suppose that $a_1 \ne 0 $. As $a_4 \ne 0 $ and  $p = - \lambda d_4  \ne 0$ and with  $\phi = - a_1/a_4 p $ we obtain $ \mathfrak{h}_3 = e^{\textnormal{ad}(\phi E_1)} \mathfrak{h} = \textnormal{span}\{  d_1 E_1 - E_2 , d_2 E_1 + E_3 , E_4 \}.$
        One can see that, if $a_1 = 0$, there is nothing to do.
    \end{enumerate}
\end{proof}

\section{Homogeneous hypersurfaces of 4-dimensional Thurston geometries}\label{Section_general_hypersur}
We begin this section by presenting the following
\begin{proposition}\label{prop_relatdim4}
	Let $G$ be a simply connected metric Lie group with $\dim(\Isom(G))=4$. Then, there is a one-to-one correspondence between isometric actions of connected subgroups on $G$ up to orbit equivalence and subalgebras of $\g{g}$ up to conjugacy and isometric automorphisms.
\end{proposition}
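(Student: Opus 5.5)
The plan is to reduce everything to the structure of the identity component $\Isom^0(G)$ and to the fact that orbits of subgroups of left translations are cosets. Although the case of interest in this paper is $\dim G=4$, the statement does not fix $\dim G$. Since $G$ acts simply transitively on itself by left translations $L(G)\subset\Isom(G)$, we have $\dim G\le 4$. I will treat the two relevant cases separately: $\dim G=4$ with trivial isotropy, and $\dim G=3$ with one-dimensional isotropy. The case $\dim G=2$ is excluded, since the isometry group would then have dimension at most $3$.

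\textbf{Case $\dim G=4$.} Here $L(G)$ is a $4$-dimensional connected subgroup of the $4$-dimensional group $\Isom^0(G)$, hence $\Isom^0(G)=L(G)\cong G$. The full isotropy $K$ of $e$ is then finite. Since $L(G)$ is normal in $\Isom(G)$, each $k\in K$ satisfies $kL_gk^{-1}=L_{k(g)}$. Consequently $k$ is an automorphism of $G$, and it is isometric. This gives $\Isom(G)=L(G)\rtimes\mathrm{Aut}_{\mathrm{isom}}(G)$, so every isometry has the form $\varphi=L_a\circ\psi$ with $\psi$ an isometric automorphism.

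\emph{From subgroups to subalgebras.} Connected subgroups of $\Isom^0(G)=G$ correspond bijectively to subalgebras $\g{h}\subset\g{g}$, and the orbits of $H$ are the right cosets $Hg$.

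\emph{Orbit equivalence versus conjugacy.} Suppose $\varphi=L_a\circ\psi$ carries the $H_1$-orbits onto the $H_2$-orbits. Then
\[
\varphi(H_1g)=a\,\psi(H_1)\,\psi(g)=\bigl(a\psi(H_1)a^{-1}\bigr)\,a\psi(g).
\]
So $\varphi$ maps $H_1$-orbits to orbits of $a\psi(H_1)a^{-1}$. Two subgroups of $G$ with the same orbit foliation coincide, because they coincide with their orbit through $e$. Hence $H_2=a\psi(H_1)a^{-1}$, and at the Lie algebra level $\g{h}_2=\mathrm{Ad}(a)\psi_*\g{h}_1$. The converse direction is the same computation read backwards. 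This gives the bijection in the form ``up to conjugacy (the $\mathrm{Ad}$ factor, computed via \eqref{eq_conjug}) and isometric automorphisms (the $\psi_*$ factor).''

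\textbf{Case $\dim G=3$, with $\Isom^0(G)$ four-dimensional and one-dimensional isotropy $K^0\cong \mathrm{SO}(2)$.} Now not every connected subgroup of $\Isom^0(G)$ lies in $L(G)$, and the correspondence must be read up to orbit equivalence. I would show that every connected subgroup $H\subset\Isom^0(G)$ acting with cohomogeneity one is orbit equivalent to a subgroup of $L(G)$. The planned route is to replace $H$ by $H'=\bigl(HK^0\bigr)\cap L(G)$ whenever $HK^0$ has the same orbits as $H$. Alternatively, one can note that the orbit $H\cdot e$ is a homogeneous surface, and use the classification of codimension-one subalgebras of $\g{isom}(G)$ to see that its tangent data are realized by a $2$-dimensional subalgebra of $\g{g}$ after conjugation by an isotropy rotation. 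Conjugation by $K^0$ acts on $\g{g}$ as isometric automorphisms, which is precisely the ambiguity allowed in the statement. Injectivity then follows exactly as in the first case.

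\textbf{Expected main obstacle.} I expect the hardest step to be this last reduction. Subgroups containing nontrivial isotropy directions can have the same orbits as a subgroup of $L(G)$ without being conjugate to it, and these must be shown to contribute nothing new. The first case, by contrast, is essentially the short normalizer argument above.
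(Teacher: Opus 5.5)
Your case $\dim G=4$ is correct and is essentially the paper's proof. The paper writes $\Isom(G)=G\rtimes(\mathrm{Aut}(G)\cap\Isom(G))$ and reads off that $H,\widetilde H$ are orbit equivalent if and only if $H=C_x\circ\alpha(\widetilde H)$. Your version is more self-contained in two places. First, the paper gets $K_e=\mathrm{Aut}(G)\cap\Isom(G)$ by citing \cite[Corollary~2.8]{HaLee}, whereas you derive it directly from the normality of $\Isom^0(G)=L(G)$ by evaluating $kL_gk^{-1}$ at $e$. Second, you prove the orbit-equivalence/conjugacy step, using that a subgroup of $G$ is its own orbit through $e$; the paper only asserts it. Note also that since $\Isom^0(G)=L(G)$, every connected isometry subgroup already is a subgroup of $G$, so no reduction ``up to orbit equivalence'' is needed.

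The genuine problem is your case $\dim G=3$. The reduction you plan there is false, and so is the proposition itself, so this case cannot be completed.

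Take $G=\Nil_3$ with its standard metric, an $\mathbb{E}(0,\tau)$-space with $\Isom^0(G)=G\rtimes\mathrm{SO}(2)$. The isotropy rotations commute with the central translations, so $H=\mathrm{SO}(2)\times Z(G)$ is a connected subgroup of $\Isom^0(G)$ acting with cohomogeneity one. Its orbits are the tubes around the vertical geodesic $Z(G)$, together with the one-dimensional singular orbit $Z(G)$. A connected subgroup of $G$ acts freely by left translations, so all its orbits have the same dimension. Hence $H$ is orbit equivalent to no subgroup of $L(G)$ and corresponds to no subalgebra of $\g{g}$. Already $K^0$ itself, with its fixed line, is a counterexample.

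Your injectivity step fails in this case too. For $G=\mathrm{Aff}^+(\R)\times\R$ with the metric of $\H^2\times\R$, $L(G)$ is not normal in $\Isom(G)$, and the isotropy rotations are not automorphisms of $G$. So isometries are no longer of the form $L_a\circ\psi$.

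The right conclusion is that the proposition must be read with $\dim G=4$, equivalently with discrete isotropy, $\Isom^0(G)=L(G)$. This is the standing assumption in the first line of the paper's proof. Under that reading your first case alone is a complete proof, and the second case should be dropped rather than proved.
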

\begin{proof}
    Let $G$ be a simply connected $4$-dimensional Lie group endowed with a left-invariant metric $g$. Since the action of $G$ on itself by left translations is free, proper, and isometric, any Lie subgroup $H\subset G$ acts on $G$ with orbits of dimension $\dim H$. 
    
    For any metric Lie group, the full isometry group $\tilde{G} = \Isom(G,g)$ can be expressed as $ \tilde{G} = G \cdot K_e$, where $G$ is identified with the subgroup of left translations and $K_e$ denotes the isotropy subgroup at the identity element. Throughout this work, we assume that $\tilde{G} $ is four-dimensional, hence, using the isotropy representation, $K_e$ can be identified with a discrete subgroup of $\textnormal{O}(4)$. 
    
    Now, let $\textnormal{Aut}(G)$ denote the automorphism group of $G$. By \cite[Corollary 2.8]{HaLee}, the isotropy subgroup consists of the isometric automorphisms of $G$, that is $ K_e = \textnormal{Aut}(G)\cap \tilde{G}$. Hence
    $$ \tilde{G} = G \rtimes \left( \textnormal{Aut}(G)\cap \tilde{G} \right).$$
    Thus, every effective isometric action of a connected Lie group on $G$ is orbit equivalent to the action of a connected Lie subgroup $H\subset G$ by left translations. Moreover, let $x \in G$ and denote by $C_x$ the conjugation by $x$. Two subgroups $H,\widetilde{H}\subset G$ induce orbit equivalent actions if and only if there exists $ x \in G$ and $ \alpha \in \textnormal{Aut}(G)\cap \tilde{G}$, such that $H = C_x \circ \alpha(\widetilde{H})$.
\end{proof}

Let $G$ be a simply connected four-dimensional metric Lie group with four-dimensional isometry group, then by Proposition~\ref{prop_relatdim4}, there is a one-to-one correspondence between cohomogeneity one actions on $G$, up to orbit equivalence, and Lie subalgebras of $\g{g}$, up to conjugacy and isometric automorphisms. In particular, cohomogeneity one actions on $G$ are induced by connected $3$-dimensional subgroups of $G$.  
     
Now, let $\gamma$ be a unit-speed normal geodesic to $H$ with $\gamma(0) = e$ and $\gamma'(0) = v$ and assume that $\gamma$ is defined in all $\R$. Let $\xi\in \g{g}$ be the left-invariant field with $\xi_e = v$, and write $H^t$ for the parallel displacement of $H$ in the direction of $\xi$ at distance $t$, that is, $H^t = \{\exp_h(t\xi_h):h\in H\}$ is the parallel (or equidistant) surface to $H$ at distance $t$, where $\exp$ is the Riemannian exponential map. Hence, $H^t=H\cdot \gamma(t)$, and the left-invariant vector field $\xi^t$ with $\xi^t_{\gamma(t)}=\gamma'(t)$ is a unit normal field to $H^t$. 
     
To obtain the shape operator $S^t$ of $H^t$ with respect to $\xi^t$, we need to determine the tangent vector $\gamma'$ to the normal geodesic $\gamma$, and then use the Koszul formula to find the Levi-Civita connection of $G$ in terms of left-invariant fields to obtain $S^t=-\nabla_{\cdot\,} \xi^t$ for each $t\in\R$. 
     
Now, let $\{E_1,E_2,E_3,E_4\}$ be the left-invariant orthonormal frame defined previously, and write $\gamma'(t) = x(t) E_1 + y(t) E_2 + z(t) E_3 + w(t) E_4 $. We must have  
\begin{equation}\label{eq_geode_4dim}
    \nabla_{\gamma'} \gamma' = x' E_1 + x \overline{\nabla}_{\gamma} E_1 + y' E_2 +  y \overline{\nabla}_{\gamma} E_2 + z' E_3 + z \overline{\nabla}_{\gamma} E_3 + w' E_4 +  w \overline{\nabla}_{\gamma} E_4 = 0. 
\end{equation}
where $\overline{\nabla}_{\gamma} E_i = x \overline{\nabla}_{E_1} E_i + y \overline{\nabla}_{E_2}  E_i + z \overline{\nabla}_{E_3} E_i +  w \overline{\nabla}_{E_4} E_i $. 
Equation \eqref{eq_geode_4dim} leads to a system of differential equations that we solve to describe $\gamma'(t)$. 
\begin{remark}
    In order to solve the system of ordinary differential equations that determine the geodesic $\gamma$, we remember that by the existence and uniqueness of geodesics, if one suppose some condition on the previous system, for instance $x(t) \equiv 0$, and find a solution for this system, it is exactly the solution that one seeks for $\gamma$.  
\end{remark}
\begin{theorem}
    Let $G$ be a 4-dimensional simply-connected unimodular Lie group with 4-dimensional isometry group and $\mathfrak{g}$ its  Lie algebra. Then a homogeneous surface $H\cdot \gamma(t)$ of $G$ , where $t$ is the parameter of a geodesic that intersects $H$ at $e$ and $S^t$ is the shape operator of such surface, satisfies 
    \begin{enumerate}
        \item If $G = \textnormal{Nil}_4$ with $d_1,d_3 > 0, \ d_2 \geq 0 $ and $ \mathfrak{h} = \textnormal{span}\{ E_1, E_2, a E_3 + b E_4\}$, then the shape operator $S^t$ of $H\cdot \gamma(t)$ is given by 
        $$ S^t = \frac{1}{2d_3d_1} \begin{bmatrix}
            0 & \omega & d_1 \\
            \omega & 0 & d_2 \\ 
            d_1 &  -d_2 & 0 
        \end{bmatrix}, \ \ \ \omega = \frac{a d_3^2}{\sqrt{a^2 + b^2}} $$
        All orbits are minimal but none is totally geodesic.

        \item If $G = \textnormal{Sol}_1^4$ with $  d_1, d_5 > 0, \ d_2 , d_4 \geq 0, \ d_3 \in \mathbb{R}$, then 
        \begin{enumerate}
            \item If $ \mathfrak{h} = \textnormal{span}\{ E_1, E_2, E_3 \}$, then the shape operator $S^t$ of $H\cdot \gamma(t)$ is given by 
            $$ S^t = d_5 \begin{bmatrix}
                0 & \omega & \theta \\
                \omega & -1 & - d_4   \\ 
                \theta & - d_4  & 1 
            \end{bmatrix}, \ \  \omega = \frac{ d_2}{2d_1}, \ \ \theta = \dfrac{2d_4d_2 - d_3}{2d_1} ,  $$
            All orbits are minimal but none is  totally geodesic. 
            \item If $ \mathfrak{h} = \textnormal{span}\{ E_1, E_2, E_4 \}$, then the shape operator $S^t$ of $H\cdot \gamma(t)$ is given by  
            $$ S^t = d_5 \begin{bmatrix}
                0 & \omega(t) & \theta \\
                \omega(t) & \tanh(-d_5t) & - d_4  \\ 
                \theta & -  d_4 & - \tanh(-d_5t) 
            \end{bmatrix},   $$
            with 
            $$ \omega(t) =  \frac{1- d_2d_5\sinh(-d_5 t)}{2d_1d_5 \cosh(-d_5t)} , \ \ \theta = \dfrac{2d_4d_2 - d_3}{2d_1} $$
            All orbits are minimal but none is totally geodesic. 
            
            \item If $ \mathfrak{h} = \textnormal{span} \bigg\{ E_1, E_4, \dfrac{1}{\sqrt{d_4^2 + 1 }} (d_4 E_2 - E_3)  \bigg\} $, then the shape operator $S^t$ of $H\cdot \gamma(t)$ is given by   
            $$ S^t = d_5\begin{bmatrix}
                0 & \omega(t) & \theta \\
                \omega(t) &  \tanh(-d_5 t) & -d_4 \\ 
                \theta & -d_4 & - \tanh(-d_5 t)
            \end{bmatrix},$$
            with 
            $$\omega(t) = \frac{\sqrt{1 + d_4^2} + d_5(d_3- d_2d_4)\sinh(-d_5 t)}{2d_1d_5\cosh(-d_5 t)}, \ \ \ \theta = \frac{d_2(1 + 2d_4^2) - d_3 d_4 }{2 d_1 \sqrt{1 + d_4^2 }} $$
            All orbits are minimal but none is totally geodesic.
            
        \end{enumerate}
        
        \item If $G = \textnormal{Sol}_{m,n}^4$ with $d_4 > 0, \ d_1 , d_3 \geq 0, d_2 \in \mathbb{R}$ and $\lambda > 1 $ then 
        \begin{enumerate}
            \item If $ \mathfrak{h} = \textnormal{span}\{ E_1, E_2, E_3 \}$, then the shape operator $S^t$ of $H\cdot \gamma(t)$ is given by 
            $$S^t =  - \frac{d_4}{2} \begin{bmatrix}
                -2 \lambda & - d_1 (\lambda-1), & \omega\\
                - d_1 (\lambda-1), & - 2 & - d_3(\lambda + 2)  \\ 
                \omega & - d_3(\lambda + 2)  & 2(\lambda + 1) 
            \end{bmatrix}$$
            with 
            $$ \theta = d_1d_3(\lambda + 2 ) - d_2 (2 \lambda + 1) $$
            All orbits are minimal but none is totally geodesic. 
            \item If $ \mathfrak{h} = \textnormal{span}\{ E_1, E_2, E_4 \}$, then the shape operator $S^t$ of $H\cdot \gamma(t)$ is given by 
            $$S^t = - \frac{d_4}{2} \begin{bmatrix}
                2\lambda \tanh( ut) & d_1 (\lambda-1) \tanh( ut)  & \omega \\
                d_1 (\lambda-1) \tanh( ut)  & 2  \tanh( ut)  & - d_3(\lambda + 2) \\ 
                \omega & -d_3(\lambda + 2) & - 2 (\lambda + 1) \tanh( ut) 
            \end{bmatrix}, $$
            with 
            $$ \omega = [d_1d_3(\lambda + 2 ) - d_2 (2 \lambda + 1) ],   \ \   u = d_4 (\lambda + 1) $$
            All orbits are minimal. An orbit is totally geodesic if and only if $t = d_2 = d_3 = 0 $.
            \item If $ \mathfrak{h} = \textnormal{span}\{ E_1 , d_3 E_2 - E_3 ,  E_4  \}$, then the shape operator $S^t$ of $H\cdot \gamma(t)$ is given by  
            $$S^t = \frac{d_4}{2} \begin{bmatrix}
                2 \lambda \tanh(d_4 t ) & \omega(t) & \theta \\
                \omega(t) & - 2 (\lambda + 1 ) \tanh(d_4 t )  & d_3 (\lambda + 2 ) \\ 
                \theta & d_3 (\lambda + 2 ) & 2 \tanh(d_4 t )
            \end{bmatrix},   $$
            with 
            $$  \omega(t) =  \frac{( 1 + 2 \lambda )(d_2  - d_3 d_1) \tanh(d_4 t)}{\sqrt{1 + d_3^2}}, \ \ \theta = \frac{d_1( 1 - \lambda) + d_1 d_3^2 (2 + \lambda) - d_2 d_3 (1 + 2 \lambda)}{\sqrt{1 + d_3^2 }} $$
            All orbits are minimal. An orbit is totally geodesic if and only if $ t = d_1 = d_3 = 0 $. 
            
            \item If $ \mathfrak{h} = \textnormal{span}\{  d_1 E_1 - E_2 , d_2 E_1 - E_3 , E_4 \}$, and let $P = - \frac{1}{\lambda d_4 }$. Then the shape operator $S^t$ of $H\cdot \gamma(t)$ is given by  
            $$S^t =  d_4 \begin{bmatrix}
                \tanh(Pt) & \omega(t) & \theta \\
                \omega(t) &  - (\lambda + 1 ) \tanh(Pt) & \rho \\ 
                \theta & \rho &  \lambda \tanh(Pt) 
            \end{bmatrix}, $$
            where 
            \begin{align*}
                 \omega(t) = & \frac{1}{2} \bigg[\frac{  (1 + d_1^2 + d_2^2)  (d_3 (2 + \lambda)( 1 + d_1^2) - d_1d_2 (1 + 2 \lambda))}{\sqrt{(1 + d_1^2) (1 + d_1^2 + d_2^2) (d_2^2 + (1 + d_1^2) \tanh^2(Pt))}} \\ 
                 & - \frac{ (1 + d_1^2) (d_3 (1 + d_1^2) -d_1 d_2 )  (2 + \lambda) \textnormal{sech}^2(Pt)}{\sqrt{(1 + d_1^2) (1 + d_1^2 + d_2^2) (d_2^2 + (1 + d_1^2) \tanh^2(Pt))}}\bigg], 
            \end{align*}
            $$  \theta = \frac{d_1  (\lambda - 1) \sqrt{1 + d_1^2 + d_2^2} \tanh(Pt)}{2\sqrt{d_2^2 + (1 + d_1^2) \tanh^2(Pt)}}, \quad \rho = - \frac{d_2  (1 + 2 \lambda)}{2 \sqrt{1 + d_1^2 }}.$$
            
            All orbits are minimal. An orbit is totally geodesic if and only if $ t =  d_2 = 0 $.
        \end{enumerate}
    \end{enumerate}
\end{theorem}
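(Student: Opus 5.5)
The plan is to treat each subalgebra $\mathfrak{h}$ from Theorem~\ref{theo_algebras} separately, following the scheme described just before the statement. First, for each Lie algebra I express the brackets in the orthonormal frame $\{E_i\}$ of Proposition~\ref{Metric_4dim_metrics}, using the constants $p,q,r,s,t,u$ already computed in the proof of Theorem~\ref{theo_algebras}. Then the Koszul formula for left-invariant fields,
$$2\langle \nabla_{E_i}E_j,E_k\rangle=\langle [E_i,E_j],E_k\rangle-\langle [E_j,E_k],E_i\rangle+\langle [E_k,E_i],E_j\rangle,$$
gives all Christoffel symbols $\nabla_{E_i}E_j$ as explicit linear combinations of the frame.

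Second, for each $\mathfrak{h}$ I determine the unit normal $\xi$ at $e$. For $\mathfrak{h}=\textnormal{span}\{E_1,E_2,aE_3+bE_4\}$ this is $(-bE_3+aE_4)/\sqrt{a^2+b^2}$. For $\mathfrak{h}_0$ it is $E_4$, for $\mathfrak{h}_1$ it is $E_3$, and for the remaining algebras it is the normalized orthogonal complement, for instance $(E_2+d_3E_3)/\sqrt{1+d_3^2}$ for $\mathfrak{h}_2$ in $\textnormal{Sol}^4_{m,n}$. I then solve the geodesic system \eqref{eq_geode_4dim} with $\gamma'(0)=\xi_e$. Following the Remark after \eqref{eq_geode_4dim}, I make an ansatz in which most coordinate functions vanish identically. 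For example, in the case $\mathfrak{h}_0$ the geodesic $\gamma'(t)=E_4$ should work, since $\nabla_{E_4}E_4=0$ in a unimodular algebra whose derived algebra is orthogonal to $E_4$. In the cases normal to $E_3$ or $E_2$, I expect a two-component ansatz such as $\gamma'=z(t)E_3+w(t)E_4$, with $z^2+w^2=1$, to reduce the system to a single ODE of the form $z'=c(1-z^2)$. Its solution is $z=\tanh(ct)$, $w=\textnormal{sech}(ct)$, and this is where the $\tanh$ and $\cosh$ factors in the statement come from. In every case I then check that the remaining equations are satisfied, which by uniqueness confirms the ansatz.

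Third, since $H^t=H\cdot\gamma(t)$ and $\xi^t$ is the left-invariant extension of $\gamma'(t)$, the shape operator is $S^tX=-\nabla_X\xi^t$ for $X$ in a left-invariant orthonormal basis of $T_{\gamma(t)}H^t$. That tangent space is the left translate of $\textnormal{Ad}$-type data; equivalently, it is the orthogonal complement of $\gamma'(t)$ in $\mathfrak{g}$, viewed through left-invariant fields. I choose an orthonormal basis of $\gamma'(t)^\perp$ adapted to the ansatz, and compute the matrix entries $\langle -\nabla_{X_i}\xi^t,X_j\rangle$ using the Christoffel symbols. Minimality then follows by taking the trace; in each case the diagonal should cancel, reflecting unimodularity because $\textnormal{tr}\,\textnormal{ad}=0$. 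Total geodesy is decided by asking when all entries vanish, which produces the stated conditions on $t$ and the $d_i$.

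The main obstacle is case 3(d) in $\textnormal{Sol}^4_{m,n}$. There the normal direction mixes $E_1,E_2,E_3$, and the geodesic ansatz needs three or more nonzero components. Building an orthonormal tangent frame then produces the square-root normalizations that appear in $\omega(t)$ and $\theta$. For this case I would first try an ansatz with $\gamma'$ in the span of the fixed normal at $e$ and $E_4$, reducing again to a $\tanh$-type ODE. I would then perform Gram--Schmidt on $\{d_1E_1-E_2,\ d_2E_1-E_3\}$ against $\gamma'(t)$ symbolically, to obtain the displayed entries.
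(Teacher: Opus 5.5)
Your strategy is the paper's: the Koszul formula in the frame $\{E_i\}$, an ansatz for the normal geodesic justified by uniqueness, and $S^t=-\nabla\xi^t$ on an orthonormal frame of $\gamma'(t)^\perp$. Your observation that $\textnormal{tr}\,S^t=\textnormal{tr}\,\textnormal{ad}_{\xi^t}=0$ justifies minimality better than the paper's ``clearly''. (By contrast, $\nabla_\xi\xi=0$ in the constant cases holds because $\xi\perp[\mathfrak g,\mathfrak g]$, not because of unimodularity; the same argument covers $\textnormal{Nil}_4$.)

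Two execution points need repair.

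\textbf{The ODE slip.} For $\mathfrak h_1$ you have $z(0)=1$ and $w(0)=0$. The Riccati equation is therefore $w'=c(1-w^2)$, giving $w=\tanh(ct)$ and $z=\textnormal{sech}(ct)$. Your choice $z=\tanh(ct)$ violates the initial condition.

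\textbf{Why the ansatz works in $\textnormal{Sol}^4_{m,n}$.} There is a structural reason, and it removes your worry about 3(d). Put $DX=[X,E_4]$ on $\mathfrak n=\textnormal{span}\{E_1,E_2,E_3\}$ and $S_D=\tfrac12(D+D^{T})$. For $\gamma'=X+wE_4$ the geodesic equations are $X'=-w\,D^{T}X$ and $w'=\langle DX,X\rangle$. Since $\mathfrak h\cap\mathfrak n$ is $D$-invariant, the unit normal $X_0\in\mathfrak n$ satisfies $D^{T}X_0=\mu X_0$, with $\mu=u,r,p$ for $\mathfrak h_1,\mathfrak h_2,\mathfrak h_3$ respectively. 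Hence $\gamma'(t)=\textnormal{sech}(\mu t)X_0+\tanh(\mu t)E_4$ in every case, and two components always suffice.

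Moreover, $\mathfrak h\cap\mathfrak n\perp\gamma'(t)$ for all $t$, so no Gram--Schmidt is needed. Take $V_1,V_2$ orthonormal in $\mathfrak h\cap\mathfrak n$ and $W=\tanh(\mu t)X_0-\textnormal{sech}(\mu t)E_4$. In this frame $S^t$ has block $-\tanh(\mu t)\langle S_DV_i,V_j\rangle$, constant column $-\tfrac12\langle DX_0,V_i\rangle$, and corner $-\mu\tanh(\mu t)$. This is cleaner than the paper's $t$-dependent frame, but it is a different frame, so you must compare invariants rather than entries.

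\textbf{The step that genuinely fails} is the last one: ``obtain the displayed entries'' and ``produces the stated conditions''. Carried out correctly, your computation contradicts the statement in several places.
\begin{itemize}
\item In 3(d), $\mu=p=-\lambda d_4$, so the rate is $\lambda d_4$, not $P=-1/(\lambda d_4)$.
\item In 3(d), the block has trace $\mu\tanh(\mu t)\neq0$ for $t\neq0$. So an orbit is totally geodesic iff $t=0$ and $DX_0\parallel X_0$, i.e.\ iff $t=d_1=d_2=0$, not $t=d_2=0$. For instance, with $d_2=0$ and $V_1=(d_1E_1-E_2)/\sqrt{1+d_1^2}$, at $t=0$ one gets $\nabla_{V_1}\xi=\tfrac12 d_1d_4(\lambda-1)E_4\neq0$. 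The paper's frame, normalised by $\sqrt{z^2+w^2}$, degenerates exactly there.
\item In case 1 the displayed matrix is not even symmetric. The entries pairing $E_1$ and $E_2$ with $V$ are, up to a common sign, $r/2=-d_2/(2d_1d_3)$ and $q/2=1/(2d_3)$.
\item In 2(c), take $X_0=(E_2+d_4E_3)/\sqrt{1+d_4^2}$ and $V=(d_4E_2-E_3)/\sqrt{1+d_4^2}$. Koszul gives $\langle\nabla_{E_1}X_0,V\rangle=-\tfrac12\langle[X_0,V],E_1\rangle=p/2=1/(2d_1)$. The display instead gives $\sqrt{1+d_4^2}/(2d_1)$ at $t=0$, so $\textnormal{tr}\big((S^0)^2\big)$ already disagrees for $d_4>0$. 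The displayed $\omega(t)$ must be divided by $\sqrt{1+d_4^2}$.
\item The correct solutions are $w=\tanh(ut)$ in 3(b), where the $E_4$-equation reads $w'=+uz^2$, and $w=-\tanh(d_4t)$ in 3(c). Both are opposite to the paper's. This only flips the $\tanh$ terms relative to the constant entries, which amounts to choosing the opposite unit normal.
\end{itemize}

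So your method is sound, but what it proves is a corrected version of the theorem, not the displayed one.
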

\begin{proof}
    We begin by determining the Riemannian connection of $\Nil_4$, $\Sol_1^4$ and $\Sol_{m,n}^4$ with respect to the basis given bay Theorem \ref{Metric_4dim_metrics}. Using Koszul Formula, we have 
    \begin{enumerate}
        \item[$\Nil_4$] 
        \begin{equation}\label{eq_conec_Nil4}
            \begin{array}{lllllll}
             \nabla_{E_1} E_1 = 0 & & \nabla_{E_1} E_2 =  -\frac{p}{2} E_4 \\
             \nabla_{E_2} E_1 = -\frac{p}{2} E_4  & & \nabla_{E_2} E_2 = 0  \\
             \nabla_{E_3} E_1 = -\frac{r}{2} E_4 & & \nabla_{E_3} E_2 = -\frac{q}{2} E_4   \\
             \nabla_{E_4} E_1 = \frac{1}{2} (p E_2 + r E_3) & & \nabla_{E_4} E_2 = \frac{1}{2} (-p E_1 + q E_3)  \\ \\ 
             \nabla_{E_1} E_3 = -\frac{r}{2} E_4 & & \nabla_{E_1} E_4 = \frac{1}{2} (p E_2 + r E_3) \\ 
             \nabla_{E_2} E_3 = -\frac{q}{2} E_4 & & \nabla_{E_2} E_4 = \frac{1}{2} (p E_1 + q E_3 ) \\
             \nabla_{E_3} E_3 = 0 & & \nabla_{E_3} E_4 = \frac{1}{2} (r E_1 + q E_2) \\
             \nabla_{E_4} E_3 = \frac{1}{2} (-r E_1 - q E_2) & & \nabla_{E_4} E_4 = 0
             \end{array}
        \end{equation} 
    \item[$\Sol_1^4$]  
        \begin{equation}\label{eq_conec_Sol14}
            \begin{array}{lllllll}
                \nabla_{E_1} E_1 = 0 & & \nabla_{E_3} E_1 = \frac{1}{2} (p E_2 - s E_4) \\
                \nabla_{E_1} E_2 =  -\frac{1}{2} (p E_3 + q E_4) & & \nabla_{E_3} E_2 = -\frac{1}{2} (p E_1 + t E_4) \\
                \nabla_{E_1} E_3 = \frac{1}{2} (p E_2 - s E_4) & & \nabla_{E_3} E_3 = r E_4 \\
                \nabla_{E_1} E_4 = \frac{1}{2} (q E_2 + s E_3) & & \nabla_{E_3} E_4 = \frac{1}{2} (s E_1 + t E_2 - 2 r E_3) \\ \\ 
                \nabla_{E_2} E_1 = -\frac{1}{2} (p E_3 + q E_4) & & \nabla_{E_4} E_1 = \frac{1}{2} (q E_2 + s E_3) \\
                \nabla_{E_2} E_2 = -r E_4  & & \nabla_{E_4} E_2 = \frac{1}{2} (-q E_1 + t E_3) \\
                \nabla_{E_2} E_3 = \frac{1}{2} (p E_1 - t E_4) & & \nabla_{E_4} E_3 = - \frac{1}{2} (s E_1 + t E_2) \\
                \nabla_{E_2} E_4 = \frac{1}{2} (q E_1 + 2 r E_2 + t E_3 ) & & \nabla_{E_4} E_4 = 0     
            \end{array}
        \end{equation}
    \item[$\Sol_{m,n}^4$]  
        \begin{equation}\label{eq_conec_Solmn}
            \begin{array}{lllllll}
                \nabla_{E_1} E_1 = -p E_4 & & \nabla_{E_3} E_1 = -\frac{s}{2}  E_4 \\
                \nabla_{E_1} E_2 =  -\frac{q}{2} E_4 & & \nabla_{E_3} E_2 = -\frac{t}{2}  E_4 \\
                \nabla_{E_1} E_3 = -\frac{s}{2} E_4 & & \nabla_{E_3} E_3 = - u E_4 \\
                \nabla_{E_1} E_4 = \frac{1}{2} (2p E_1 + q E_2 + s E_3) & & \nabla_{E_3} E_4 = \frac{1}{2} (s E_1 + t E_2 + 2 u E_3) \\ \\ 
                \nabla_{E_2} E_1 = -\frac{q}{2} E_4 & & \nabla_{E_4} E_1 = \frac{1}{2} (q E_2 + s E_3) \\
                \nabla_{E_2} E_2 = -r E_4  & & \nabla_{E_4} E_2 = \frac{1}{2} (-q E_1 + t E_3) \\
                \nabla_{E_2} E_3 = - \frac{t}{2} E_4 & & \nabla_{E_4} E_3 = - \frac{1}{2} (s E_1 + t E_2) \\
                \nabla_{E_2} E_4 = \frac{1}{2} (q E_1 + 2 r E_2 + t E_3 ) & & \nabla_{E_4} E_4 = 0     
            \end{array}
        \end{equation}
\end{enumerate}
We now approach each case separately    
\begin{enumerate}
    \item[$\Nil_4$]  If $\mathfrak{h} = \textnormal{span} \{ E_1, E_2, V \}$, with $V = \frac{1}{\sqrt{a^2 + b^2}} (a E_3 + b E_4)$. We have that $\gamma'(0) = \frac{1}{\sqrt{a^2 + b^2}} (b E_3 - a E_4) $ is the unitary normal filed to this subspace. Using \eqref{eq_conec_Nil4} we have  
    $$\overline{\nabla}_{\gamma'(0)} \gamma'(0) = \frac{1}{2(a^2 + b^2)} (ba + ab )(r E_1 + q E_2) = 0.$$ 
    It follows that  $\gamma'(t) = \gamma'(0)$ for all $t \in\R$. As the tangent space to $H\cdot\gamma(t)$ at $\gamma(t)$ is given by $\g{h}_{\gamma(t)}$, we compute   
    $$ - \overline{\nabla}_{E_1} \gamma' = \frac{1}{2} \bigg[r V  +  \frac{a p}{\sqrt{a^2 + b^2}} E_2 \bigg], \quad - \overline{\nabla}_{E_2} \gamma' =  \frac{1}{2} \bigg[ q V  + \frac{ap}{\sqrt{a^2 + b^2}} E_1 \bigg], \quad - \overline{\nabla}_{V} \gamma' = \frac{1}{2}  (r E_1 + q E_2 ). $$
    Hence 
    $$ S^t = \frac{1}{2 d_1 d_3} \begin{bmatrix}
        0 & \omega & d_1 \\
        \omega & 0 & -d_2 \\ 
        d_1 &  -d_2 & 0 
    \end{bmatrix}, \ \ \ \omega = \frac{a}{\sqrt{a^2 + b^2}}  \frac{d_3}{d_1}$$
    Clearly, such hypersurfaces are always minimal. 
    
    \item[$\Sol^4_1$]
    Now, let $\gamma'(t) = x(t) E_1 + y(t) E_2 + z(t) E_3 + w(t) E_4 $. Using \eqref{eq_conec_Sol14} we have 
    $$ \begin{array}{rcl}
        \overline{\nabla}_{\gamma} E_1 & = & \frac{1}{2} [(zp  + wq )E_2 + (ws - yp) E_3 - ( zs + yq ) E_4 ] \\
        \overline{\nabla}_{\gamma} E_2 & = &   \frac{1}{2} [ - (z p  + w q )E_1 + (wt - xp) E_3 - ( xq + 2 y r + zt )E_4 ]   \\
       \overline{\nabla}_{\gamma} E_3 & = &   \frac{1}{2} [ (y p - w s) E_1 + (xp -wt) E_2 + (2zr- yt - xs) E_4 ]   \\
       \overline{\nabla}_{\gamma} E_4 & = &  \frac{1}{2} [ (yq + z s)E_1 + (xq + 2yr + zt )E_2 + (xs + yt - 2rz )E_3  ] 
    \end{array}$$
    So, in order to $\gamma(t)$ to be a geodesic, $(x,y,z,w)$ must be the solution of the following system
    \begin{equation}\label{systemSol14}
     \begin{cases}\begin{array}{rcl}
        2 x'  & = & 0 \\
        2 y' + 2 x (zp  + wq ) + 2yr  & = & 0 \\
        2 z' + 2x(ws - yp)  + 2 ywt - 2wrz & = & 0 \\ 
        2 w' - 2x( zs + yq ) - 2y ( y r + zt ) +  2z^2r  & = & 0 
    \end{array}
    \end{cases}   
    \end{equation}
    Now, for each subalgebra in Theorem \ref{theo_algebras}, we compute $\gamma'$. 
    \begin{enumerate}
        \item If $\g{h} = \mathfrak{h}_0$, then $\gamma'(0) = E_4 $ is the unitary vector that is orthogonal to this subspace. Hence $\overline{\nabla}_{\gamma'(0) } \gamma'(0) = 0 $. Thus $\gamma'(t) = \gamma'(0)$ for all $t \in \mathbb{R}$. 
        \item If $\mathfrak{h} = \mathfrak{h}_1$ then $\gamma'(0) = E_3 $, that is, we have initial conditions $ x(0) = y(0) = w(0) = 0$ and $z(0) = 1 $. Returning to the system \eqref{systemSol14} and supposing $x \equiv 0 $ and $y \equiv 0$ we get 
    $$\begin{cases}
        \begin{array}{rcl}
        2 z' - 2 r w z & = & 0 \\ 
        2 w' + 2 r z^2 & = & 0 
        \end{array}
    \end{cases} $$    
    Hence
    $$ z'' z - z'^2 + 2 r^2 z^4 = 0 . $$
    Since $z(0)= 1 $, then $ z(t) =  \frac{1}{\cosh(r t)}$ and thus $ w(t)  = - \tanh(rt)$, where $w(0) = 0 $. Hence 
    $$ \gamma'(t) = \frac{1}{\cosh(r t)} E_3 - \tanh(rt) E_4 $$
        
    \item If  $ \mathfrak{h}  = \mathfrak{h}_2 $ we have that $\gamma(0) =\frac{1}{\sqrt{d_4^2 + 1 }} (E_2 + d_4 E_3)$ , that is, we have initial conditions $x(0) = w(0) = 0$, $  y(0) = \dfrac{1}{\sqrt{d_4^2 + 1 }}$ and $ z(0) = \dfrac{d_4}{\sqrt{d_4^2 + 1 }} $
    Returning to \eqref{systemSol14} and supposing $x \equiv 0 $ we get
    $$ \begin{cases}\begin{array}{rcl}
        2y' + 2 r w y   & = & 0 \\
        2z' + 2 w (yt - rz ) & = & 0 \\ 
        2w' - 2 y ( y r + zt ) + 2 r z^2  & = & 0 
    \end{array}\end{cases}$$
    Supposing $z(t) = d_4 y(t)$, and as $t = 2 d_4 r $ we obtain 
    $$ \begin{cases}\begin{array}{rcl}
        y' + r w y   & = & 0 \\
        w' - r (1 + d_4^2) y^2  & = & 0 
    \end{array}\end{cases}$$
    Hence 
    $$ - y'' y + y'^2 - r^2(1 + d_4^2 ) y^4 = 0,  $$
    Since $y(0) = \dfrac{1}{\sqrt{d_4^2 + 1 }} $, then $ y(t) =  \dfrac{1}{\sqrt{d_4^2 + 1 }} \dfrac{1}{\cosh(r t)}$. Also $ w(t) =  \tanh(rt)$, where $w(0) = 0 $. Thus, since $z(t) = d_4 y(t)$, we obtain 
    $$\gamma'(t) =  \dfrac{1}{\sqrt{d_4^2 + 1 }} \frac{1}{\cosh(r t)} E_2 + \dfrac{d_4}{\sqrt{d_4^2 + 1 }} \frac{1}{\cosh(r t)} E_3 +  \tanh(rt) E_4 $$
    \end{enumerate}
    We proceed to compute the shape operator for each of the cases above: 
    \begin{enumerate}
    \item[$\mathfrak{h}_0$] We have that $\gamma'(t) = E_4 $ for all $t$. Thus we have  
    \begin{align*}
        - \overline{\nabla}_{E_1} \gamma' & =  - \frac{1}{2} (q E_2  + s E_3), \\  
        - \overline{\nabla}_{E_2} \gamma' & =  - \frac{1}{2} ( q E_1 +2r E_2 + t E_3), \\  
        - \overline{\nabla}_{E_3} \gamma' & = - \frac{1}{2} (s E_1 + t E_2 - 2r E_3 ). 
    \end{align*}
    Then
    $$ S^t = d_5 \begin{bmatrix}
        0 & \omega & \theta \\
        \omega & - 1 & - d_4  \\ 
        \theta & - d_4  & 1 
    \end{bmatrix}, \ \  \omega = \frac{ d_2}{2d_1}, \ \ \theta = \dfrac{2d_4d_2 - d_3}{2d_1} ,  $$

    \item[$\mathfrak{h}_1$] we have that $\gamma(0)= E_3 $ and also 
    $$ \gamma'(t) = z(t) E_3 + w(t) E_4 , \ \ \textnormal{ where }  z(t) = \frac{1}{\cosh(r t)} , \ w(t) = - \tanh(rt) $$
    Thus, the orthogonal subspace to $\gamma'(t)$ at $t$ is generated by the orthonormal frame $\{E_1, E_2, V \}$ with $V = w(t) E_3 - z(t) E_4$ . We compute 
    \begin{align*}
        - \overline{\nabla}_{E_1} \gamma'(t) & = - \frac{1}{2} [(z(t)p + w(t)q ) E_2  + s V ], \\
        - \overline{\nabla}_{E_2} \gamma'(t) & = - \frac{1}{2} [(z(t)p + w(t)q ) E_1 + 2 w(t) r E_2 + t V ], \\
        - \overline{\nabla}_{V} \gamma'(t) & =   - \frac{1}{2} \bigg[ - 2r w(t) V + sE_1 + tE_2\bigg].
    \end{align*}
    Hence
    $$ S^t = d_5 \begin{bmatrix}
        0 & \omega & \theta \\
        \omega &   \tanh(-d_5t) & - d_4  \\ 
        \theta & - d_4 & - \tanh(-d_5t) 
    \end{bmatrix},$$  
    where 
    $$ \omega = \frac{1 - d_2 d_5 \sinh(-d_5t)}{2d_1d_5\cosh(-d_5t)}, \quad \theta = \frac{2d_4d_2 - d_3}{2d_1} $$

    \item[$ \mathfrak{h}_2$] We have that $\gamma'(0) = \dfrac{1}{\sqrt{d_4^2 + 1 }} (E_2 + d_4E_3) $ and $\gamma'(t) = y(t) E_2 + z(t) E_3 + w(t) E_4$ where 
    $$ y(t) = \dfrac{1}{\sqrt{d_4^2 + 1 }} \frac{1}{\cosh(r t)}, \quad z(t) = \dfrac{d_4}{\sqrt{d_4^2 + 1 }} \frac{1}{\cosh(r t)}, \quad w(t) = \tanh(rt) .$$
    The orthogonal space to $\gamma'(t)$ at $t$ is generated by the orthonormal frame $\{ E_1,V,W\}$, where  
    $$\begin{array}{l}
         V =  \cosh(rt)[z (t) E_2 - y(t) E_3], \\ 
         W = \cosh(rt) [y(t) w(t)  E_2 + z(t) w(t)  E_3 - (y^2(t) + z^2(t)) E_4] .
    \end{array}  $$
    We compute 
    \begin{align*}
        - \overline{\nabla}_{E_1} \gamma'(t) & =  A_1 V + A_2 W, \\
        - \overline{\nabla}_{V} \gamma'(t) & = A_1 E_1 + A_- V +  A_3 W , \\
        - \overline{\nabla}_{W} \gamma'(t) & = A_2 E_1 + A_3 V +  A_+ W ,
    \end{align*}
    where 
    $$\begin{array}{lll}
        A_1 = \dfrac{1}{2} \bigg[ p \dfrac{1}{\cosh(rt)} + \dfrac{(d_4q-s)\tanh(rt)}{\sqrt{1 + d_4^2}}\bigg], &  A_2 = \dfrac{q + d_4s }{2 \sqrt{1 + d_4^2 }} , \\
        A_3 = \dfrac{4 d_4 r - t + d_4^2 t }{2 (1 + d_4^2) }, & A_\pm =  \dfrac{((d_4^2 - 1)r \pm d_4t)\tanh(rt)}{1 + d_4^2}.
    \end{array}$$
    Simplifying, we have 
    $$ S^t = d_5 \begin{bmatrix}
        0 & \omega(t) & \theta \\
        \omega(t) &  \tanh(-d_5 t) & -d_4 \\ 
        \theta & -d_4 & - \tanh(-d_5 t)
    \end{bmatrix},   $$
    with 
    $$\omega = \frac{\sqrt{1 + d_4^2} + d_5(d_3- d_2d_4)\sinh(-d_5 t)}{2d_1d_5\cosh(-d_5 t)}, \ \ \ \theta = \frac{d_2(1 + 2 d_4^2) - d_3 d_4 }{2 d_1 \sqrt{1 + d_4^2 }}.  $$
\end{enumerate}

\item[$\Sol_{m,n}^4$]

Let $\gamma' = x(t) E_1 + y (t) E_2 + z(t) E_3 + w(t) E_4 $. Using \eqref{eq_conec_Solmn} we have 
$$ \begin{array}{rcl}
    \overline{\nabla}_{\gamma} E_1 & = & \frac{1}{2} [ (- 2 x p - y q - z s) E_4  + w (q E_2 + s E_3 )   ] \\
    \overline{\nabla}_{\gamma} E_2 & = &   \frac{1}{2} [ (- x q - 2 y r - z t) E_4  + w ( -q E_1 + t E_3 )   ]   \\
   \overline{\nabla}_{\gamma} E_3 & = &  \frac{1}{2} [ (- x s - y t + 2 z u) E_4 - w ( s E_1 + t E_2 ) ]   \\
   \overline{\nabla}_{\gamma} E_4 & = &  \frac{1}{2} [ (2 x p + y q +  z s ) E_1 + (x q + 2 y r +  z t) E_2 + (x s + y  t +  2 z u) E_3) ]    
\end{array}$$
Thus $\gamma(t)$ is a geodesic if it satisfies the system
\begin{equation}\label{systemSolmn}
 \begin{cases}\begin{array}{rcl}
    2 x'  - w (q y + zs) + w(2 p x  + q y + s z )  & = & 0 \\
    2 y' + w (xq - zt)  + w(q x + 2 r y + t z)  & = & 0 \\
    2 z' + w (xs + yt) + w(s x  + t y + 2 u z )  & = & 0 \\ 
    w' - p x^2 - r y^2 + u z^2 - q x y - s x z  - t y z & = & 0 
\end{array}\end{cases}   
\end{equation}
We now solve this system for each subalgebra of Theorem \ref{theo_algebras} separately
\begin{enumerate}
    \item If $\g{h} = \g{h}_0 $  we have that $\gamma'(0) = E_4 $ is the normal vector to this subspace. Since  $\overline{\nabla}_{E_4 } E_4  = 0 $, then $ \gamma'(t) = \gamma'(0)$, for all $t \in \R$.
    \item If $\g{h} = \mathfrak{h}_1$ we have that $\gamma'(0) = E_3 $, that is, we have initial conditions $ x(0) = 0$, $y(0) = 0$, $ z(0) = 1$,  $w(0) = 0 $. Returning to the system \eqref{systemSolmn} and supposing $x \equiv 0 $ and $y \equiv 0$ we get 
    $$ \begin{cases}\begin{array}{rcl}
    z' + u w z  & = & 0 , \\ 
    w' + u z^2  & = & 0 .
    \end{array}\end{cases}$$    
    Hence 
    $$ z'' z - z'^2 - u^2 z^4 = 0 . $$
    Since $z(0)= 1 $, then  $ z(t) =  \textnormal{sech}(u t)$. As $w(0) = 0 $, we have also that  $ w(t) = - \tanh(ut)$. Thus  
    $$ \gamma'(t) =   \frac{1}{\cosh(ut)} E_3 - \tanh(ut) E_4. $$

    \item If  $\g{h} = \mathfrak{h}_2$, then  $\gamma'(0) =\dfrac{1}{\sqrt{d_3^2 + 1 }} (E_2 + d_3 E_3)$, that is, we have initial conditions $ x(0) = w(0) =0$ and $$ y(0) = \dfrac{1}{\sqrt{d_3^2 + 1 }}, \ \ z(0) =  \dfrac{d_3}{\sqrt{d_3^2 + 1 }}.$$
    Returning to the system \eqref{systemSolmn} and supposing $x(t) \equiv 0 $  and also $z(t) = d_3 y(t)$, we have 
    $$  \begin{cases}\begin{array}{rcl}
    (1 + d_3) y' + y w (r + t  + d_3 u ) & = & 0, \\ 
    w' - (d_3^2 u + t d_3 + r) y^2 & = & 0 .
    \end{array}\end{cases} $$
    Hence
    $$ y'' y - y'^2 + \phi \varphi y^4 = 0 , \ \ \textnormal{ where } \ \phi = - \frac{r + t + d_3 u}{d_3 + 1 }, \  \varphi = - (d_3^2 u + d_3 t + r) . $$
    Since $y(0) = \dfrac{1}{\sqrt{d_3^2 + 1 }} $ then $ y(t) =  \dfrac{1}{\sqrt{d_3^2 + 1 }} \dfrac{1}{\cosh(d_4 t)} $. Also, as $w(0) = 0$, then  $ w(t) = \tanh(d_4 t) $. Finally, since  $z(t) = d_3 y(t)$, we have 
    $$\gamma'(t) =  \dfrac{1}{\sqrt{d_3^2 + 1 }} \frac{1}{\cosh(d_4 t)} (E_2  + d_3 E_3) + \tanh(d_4t) E_4.  $$
    
    \item If  $\g{h} = \g{h}_3 $, we have $ \gamma'(0) =\dfrac{1}{\sqrt{d_1^2 + d_2^2 + 1 }} (E_1 + d_1 E_2 + d_2 E_3)$, that is, we have initial conditions 
    $$ x(0) = \dfrac{1}{\sqrt{d_1^2 + d_2^2 + 1 }}, \ \ y(0) = \dfrac{d_1}{\sqrt{d_1^2 + d_2^2 + 1 }}, \ \ z(0) =  \dfrac{d_2}{\sqrt{d_1^2 + d_2^2 + 1 }}, \ \ w(0) = 0 . $$

    returning to the system \eqref{systemSolmn} and supposing $z(t) = d_2 x(t)$  and also $y(t) = d_1 x $ we have 
    $$ \begin{cases}\begin{array}{rcl}
        x' + p w x & = & 0 \\
        d_1 x' + w x (q + r d_1) & = & 0 \\
        d_2 x' + wx (s + t d_1 + u d_2) & = & 0 \\ 
        w' - (p  + r d_1^2  + u d_2^2 + q d_1 + s d_2 + t d_1 d_2) x^2 & = & 0 
    \end{array}\end{cases} $$
    that we rewrite as 
    $$ \begin{cases}\begin{array}{rcl}
        \phi x' + w x \varphi & = & 0 \\ 
        w' - \eta x^2 & = & 0 
    \end{array}\end{cases}, \textnormal{ where } 
    \begin{cases}
        \begin{array}{l}
            \phi =  1 + d_1 + d_2 , \\ 
            \varphi = p + q + s + (r + t )d_1 + u d_2 , \\ 
            \eta = p  + r d_1^2  + q d_1 + t d_1 d_2 + s d_2 + u d_2^2
        \end{array}
    \end{cases}$$
    Thus
    $$ x'' x - x'^2 + \frac{\varphi \eta}{\phi} x^4 = 0. $$
    Since $x(0) = \dfrac{1}{\sqrt{d_1^2+ d_2^2 + 1 }} $, then $ x(t) =  \dfrac{1}{\sqrt{d_1^2 + d_2^2 + 1}} \dfrac{1}{\cosh(P t)} $, where $P = -1/(\lambda d_4)$. Also, since $w(0) =0 $, we have that $ w(t) = \tanh(P t)$. Finally, as $y(t) = d_1 x(t)$ and $z(t) = d_2 x(t)$, we get  
    $$\gamma'(t) =  \dfrac{1}{\sqrt{d_1^2 + d_2^2 + 1}} \dfrac{1}{\cosh(P t)}(E_1 + d_1 E_2 + d_2 E_3 ) + \tanh(Pt) E_4,  \ \ \ P = - \frac{1}{\lambda d_4} .$$
\end{enumerate}

We now proceed to compute the shape operator of homogeneous hypersurfaces in  $\textnormal{Sol}_{m,n}^4$ using the previous cases
\begin{enumerate}
    \item[$\g{h}_0$] We have that $\gamma'(t) = E_4 $ for all $t \in \mathbb{R}$. Thus 
    \begin{align*}
        - \overline{\nabla}_{E_1} \xi & =  - \frac{1}{2} (2p E_1 + q E_2 + s E_3), \\  
        - \overline{\nabla}_{E_2} \xi & =  - \frac{1}{2} (q E_1 + 2 r E_2 + t E_3 ), \\ 
        - \overline{\nabla}_{E_3} \xi & = - \frac{1}{2} (s E_1 + t E_2 + 2 u E_3).
    \end{align*}
    Hence 
    $$ S^t  =  - \frac{d_4}{2} \begin{bmatrix}
        -2 \lambda & - d_1 (\lambda-1), & \omega \\
        - d_1 (\lambda-1), & - 2 & - d_3(\lambda + 2)  \\ 
        \omega & - d_3(\lambda + 2)  & 2(\lambda + 1) 
    \end{bmatrix},$$
    where 
    $$ \omega = d_1d_3(\lambda + 2 ) - d_2 (2 \lambda + 1) $$

    \item[$\g{h}_1$] We have that $\gamma'(0) = E_3 $. Thus for $\gamma'(t) = \dfrac{1}{\cosh(u t)} E_3  - \tanh( ut)  E_4$, 
    the orthogonal space to $\gamma'(t)$ at $t$ is generated by the orthonormal frame $\{ E_1, E_2, V \}$, with $ V = w(t) E_3 - z(t) E_4 $. Hence
    \begin{align*}
        - \overline{\nabla}_{E_1} \gamma'(t) & = - \frac{1}{2} [2 p w E_1 + q w E_2 + s V], \\
        - \overline{\nabla}_{E_2} \gamma'(t) & = - \frac{1}{2} [ q w E_1 + 2 r w E_2 + t V ], \\ 
        - \overline{\nabla}_{V} \gamma'(t) & =  - \frac{1}{2} [ s E_1 + t E_2 + 2uw V ].
    \end{align*}
    Thus
    $$ S^t = - \frac{d_4}{2} \begin{bmatrix}
        2\lambda \tanh( ut) & d_1 (\lambda-1) \tanh( ut)  & s \\
        d_1 (\lambda-1) \tanh( ut)  & 2  \tanh( ut)  & - d_3(\lambda + 2) \\ 
        s & -d_3(\lambda + 2) & - 2 (\lambda + 1) \tanh( ut) 
    \end{bmatrix}, $$
    where $s = [d_1d_3(\lambda + 2 ) - d_2 (2 \lambda + 1) ]$, and $u = d_4 (\lambda + 1) $.

    \item[$\g{h}_2$] We have $\gamma'(0) = \dfrac{1}{\sqrt{d_3^2 + 1 }} (E_2 + d_3 E_3) $. Thus for 
    $$\gamma'(t) = \dfrac{1}{\sqrt{d_3^2 + 1 }} \frac{1}{\cosh(d_4 t)} E_2 + \dfrac{d_3}{\sqrt{d_3^2 + 1 }} \frac{1}{\cosh(d_4 t)} E_3 + \tanh(d_4 t) E_4, $$
    the orthogonal space to $\gamma'(t)$ at $t$ is generated by the orthonormal frame $\{ E_1, V , 
    W\}$ with $ V =  \cosh(d_4 t)[z (t) E_2 - y(t) E_3]$ and $ W = \cosh(d_4 t) [y w  E_2 + z w  E_3 - (y^2 + z^2) E_4] $. Now we compute 
    \begin{align*}
        - \overline{\nabla}_{E_1} \gamma'(t) & =  A_1 E_1 + A_2 V + A_3 W, \\
        - \overline{\nabla}_{V} \gamma'(t) & = A_2 E_1 + A_4 V + A_5 W,\\
        - \overline{\nabla}_{W}\gamma'(t) & = A_3 E_1 + A_5 V + A_6 W, 
    \end{align*}
    where 
    $$ A_1 =  - \frac{1}{2} [2 p w] ,  \ \ \  A_2= -\frac{1}{2}  \frac{(d_3 q - s)\tanh(d_4 t)}{\sqrt{1 + d_3^2}}, \ \ \ A_3 = - \frac{1}{2} \frac{q + d_3 s }{\sqrt{1 + d_3^2 }}, $$ 
    $$  A_4 =  - \dfrac{1}{2}  \dfrac{2 (d_3^2 r - d_3 t + u) \tanh(d_4 t)}{1 + d_3^2}, \quad A_5 =  -\frac{1}{2}  \frac{2 d_3 r - t + d_3^2 t - 2 d_3 u}{ 1 + d_3^2 },$$
    $$  A_6 =   - \frac{1}{2}  \frac{2 (r + d_3 (t + d_3 u)) \tanh(d_4t)}{1 + d_3^2}.$$     
    Simplifying, we have 
    $$S^t = \frac{d_4}{2} \begin{bmatrix}
        2 \lambda \tanh(d_4 t ) & \omega & \theta \\
        \omega & - 2 (\lambda + 1 ) \tanh(d_4 t )  & d_3 (\lambda + 2 ) \\ 
        \theta & d_3 (\lambda + 2 ) & 2 \tanh(d_4 t )
    \end{bmatrix}, $$
    where 
    $$\omega =  \frac{[d_2 ( 1+ 2 \lambda ) - d_3 d_1 ( 2 \lambda + 1)] \tanh(d_4 t)}{\sqrt{1 + d_3^2}},$$
    $$\theta = \frac{d_1( 1 - \lambda) + d_1 d_3^2 (2 + \lambda) - d_2 d_3 (1 + 2 \lambda)}{\sqrt{1 + d_3^2 }} $$

    \item[$ \mathfrak{h}_3 $] We have that $\xi =\dfrac{1}{\sqrt{d_1^2 + d_2^2 + 1 }} (E_1 + d_1 E_2 + d_2 E_3)$. Thus for 
    $$ \gamma'(t) = \dfrac{1}{\sqrt{d_1^2 + d_2^2 + 1}} \dfrac{1}{\cosh(P t)} (E_1 + d_1 E_2 + d_2 E_3) + \tanh(Pt) E_4 , \ \ \ P = - \frac{1}{\lambda d_4}$$
    The orthogonal space to $\gamma'(t)$ at $t$ is generated by the orthonormal frame $\{V, W , Z\}$ with $ \mu = \sqrt{ x^2 + y^2}$, $ \nu = \sqrt{z^2 + w^2 }$, and 
    \begin{align*}
        V & = \dfrac{1}{\mu} [y (t) E_1 - x(t) E_2], \\ 
         W & =  \dfrac{1}{\nu} (w(t) E_3 -  z(t) E_4) , \\
         Z & = \dfrac{\nu}{\mu} (-  x E_1 - y E_2) +  \frac{\mu}{\nu} ( z E_3 + w  E_4)
    \end{align*}
    Hence
    \begin{align*}
        - \overline{\nabla}_{V} \gamma'(t) & =  A_1 V + A_2 W + A_3 Z \\
        - \overline{\nabla}_{W} \gamma'(t) & = A_2 E_1 + A_4 V + A_5 W   \\
        - \overline{\nabla}_{Z} \gamma'(t) & =  A_3 E_1 + A_5 V + A_6 W  
    \end{align*}
    where $ A_1 =  d_4 \tanh(Pt) $, 
    $$ A_3 = - \frac{d_1 d_4 (\lambda - 1 ) \tanh(Pt)}{2 }\left[\frac{d_2^2 }{( 1 + d_1^2 + d_2^2)}\frac{1}{\cosh^2(Pt)} + \tanh^2(Pt)\right]^{-1/2}, $$ 
    \begin{align*}
        A_2 = & D \big\{ (1 + d_1^2 + d_2^2) [d_3 (2 + \lambda ) + d_1^2 d_3 (2 + \lambda) - d_1 d_2( 1 + 2 \lambda)] \\ 
        & - (1 + d_1^2) (-d_1 d_2 + d_3 + d_1^2 d_3)  (2 + \lambda) \textnormal{sech}^2(Pt) \big\}, 
    \end{align*}
    with
    $$ D = - \frac{d_4}{2} \left[(1 + d_1^2)(1 + d_1^2 + d_2^2) \frac{d_2^2 + ( 1 + d_1^2 + d_2^2) \sinh^2(Pt)}{\cosh^2(Pt)}\right]^{-1/2},$$
    and 
    $$ A_4 = - d_4 (\lambda + 1 ) \tanh(Pt) , \ \ \ A_5 =  - \frac{d_2 d_4 (1 + 2 \lambda)}{2 \sqrt{1 + d_1^2 }} \ \ A_6 =  d_4 \lambda \tanh(Pt) $$ 
    Simplifying we have 
    $$ S^t = d_4 \begin{bmatrix}
         \tanh(Pt) & \omega & \theta \\
        \omega &  -  (\lambda + 1 ) \tanh(Pt) & \rho \\ 
        \theta & \rho &  \lambda \tanh(Pt) 
    \end{bmatrix}, $$
    \begin{align*}
        \omega = & \frac{1}{2} \left[\frac{  (1 + d_1^2 + d_2^2)  (d_3 (2 + \lambda)( 1 + d_1^2) - d_1d_2 (1 + 2 \lambda))}{\sqrt{(1 + d_1^2) (1 + d_1^2 + d_2^2) (d_2^2 + (1 + d_1^2) \tanh^2(Pt))}} \right. \\ 
        & - \left. \frac{ (1 + d_1^2) (d_3 (1 + d_1^2) -d_1 d_2 )  (2 + \lambda) \textnormal{sech}^2(Pt)}{\sqrt{(1 + d_1^2) (1 + d_1^2 + d_2^2) (d_2^2 + (1 + d_1^2) \tanh^2(Pt))}}\right], 
    \end{align*}
    $$ \theta = \frac{d_1 (\lambda - 1) \sqrt{1 + d_1^2 + d_2^2} \tanh(Pt)}{2 \sqrt{d_2^2 + (1 + d_1^2) \tanh^2(Pt)}} , \ \ \ \rho = - \frac{d_2 (1 + 2 \lambda)}{2 \sqrt{1 + d_1^2 }} $$

    \end{enumerate}
\end{enumerate}
    
\end{proof}

\section{Funding}

This study was financed by CNPq - Conselho Nacional de Desenvolvimento Cient\'ifico e Tecnol\'ogico grant number 162148/2021-6 and CAPES Finance Code 001.

\bibliographystyle{plain}
\bibliography{references}

@phdthesis{Filipkiewicz,
    title    = {Four-dimensional geometries},
    school   = {University of Warwick},
    author   = {Filipkiewicz, Richard },
    year     = {1983}, 
}

@article {MiguelManzano,
    AUTHOR = {Dom\'inguez-V\'azquez, Miguel and Manzano, Jos\'e{} M.},
     TITLE = {Isoparametric surfaces in {$\mathbb{E}(\kappa,\tau)$}-spaces},
   JOURNAL = {Ann. Sc. Norm. Super. Pisa Cl. Sci. (5)},
  FJOURNAL = {Annali della Scuola Normale Superiore di Pisa. Classe di
              Scienze. Serie V},
    VOLUME = {22},
      YEAR = {2021},
    NUMBER = {1},
     PAGES = {269--285},
      ISSN = {0391-173X,2036-2145},
   MRCLASS = {53A10 (53B25 53C30)},
  MRNUMBER = {4288655},
}

@article{Wall,
title = {Geometric structures on compact complex analytic surfaces},
journal = {Topology},
volume = {25},
number = {2},
pages = {119-153},
year = {1986},
issn = {0040-9383},
doi = {https://doi.org/10.1016/0040-9383(86)90035-2},
url = {https://www.sciencedirect.com/science/article/pii/0040938386900352},
author = {C.T.C. Wall}
}

@article{Thuong,
    AUTHOR = {Thuong, Scott Van},
     TITLE = {Metrics on 4-dimensional unimodular {L}ie groups},
   JOURNAL = {Ann. Global Anal. Geom.},
  FJOURNAL = {Annals of Global Analysis and Geometry},
    VOLUME = {51},
      YEAR = {2017},
    NUMBER = {2},
     PAGES = {109--128},
      ISSN = {0232-704X,1572-9060},
   MRCLASS = {53C30 (17B05 22E15)},
  MRNUMBER = {3612950},
MRREVIEWER = {Vladimir\ V.\ Gorbatsevich},
       DOI = {10.1007/s10455-016-9527-z},
       URL = {https://doi.org/10.1007/s10455-016-9527-z},
}

@article{Lee,
author = {Lee, Jong Bum and Lee, Kyung and Shin, J.-K and Yi, Seunghun},
year = {2007},
month = {09},
pages = {},
title = {Unimodular groups of type $\mathbb{R}^3 \rtimes \mathbb{R}$},
volume = {44},
journal = {Journal of the Korean Mathematical Society},
doi = {10.4134/JKMS.2007.44.5.1121}
}

@article {HaLee,
    AUTHOR = {Ha, Ku Yong and Lee, Jong Bum},
     TITLE = {The isometry groups of simply connected 3-dimensional
              unimodular {L}ie groups},
   JOURNAL = {J. Geom. Phys.},
  FJOURNAL = {Journal of Geometry and Physics},
    VOLUME = {62},
      YEAR = {2012},
    NUMBER = {2},
     PAGES = {189--203},
      ISSN = {0393-0440,1879-1662},
   MRCLASS = {53C30 (22E15)},
  MRNUMBER = {2864471},
MRREVIEWER = {Claudio\ Gorodski},
       DOI = {10.1016/j.geomphys.2011.10.011},
       URL = {https://doi.org/10.1016/j.geomphys.2011.10.011},
}

@article{Kodama,
author = {Kodama, H and Takahara, A and Tamaru, H},
year = {2011},
pages = {229 - 243},
title = {The space of left-invariant metrics on a Lie group up to isometry and scaling},
volume = {135},
journal = {manuscripta math},
doi = {10.1007/s00229-010-0419-4}
}

@article{Alekseevski,
author = {Alekseevskiĭ, D},
year = {2007},
month = {10},
pages = {87},
title = {Homogeneous Riemannian Spaces of Negative Curvature},
volume = {25},
journal = {Mathematics of the USSR-Sbornik},
doi = {10.1070/SM1975v025n01ABEH002200}
}

@article{Biggs,
    AUTHOR = {Biggs, Rory and Remsing, Claudiu C.},
     TITLE = {On the classification of real four-dimensional {L}ie groups},
   JOURNAL = {J. Lie Theory},
  FJOURNAL = {Journal of Lie Theory},
    VOLUME = {26},
      YEAR = {2016},
    NUMBER = {4},
     PAGES = {1001--1035},
      ISSN = {0949-5932},
   MRCLASS = {22E15 (17B05 22E60)},
  MRNUMBER = {3487553},
}

@article{Andrada,
author = {Andrada, Adrian and Barberis, Maria and Dotti, Isabel and Ovando, Gabriela},
year = {2004},
month = {03},
pages = {},
title = {Product structures on four dimensional solvable Lie algebras},
volume = {7},
journal = {Homology, Homotopy and Applications},
doi = {10.4310/HHA.2005.v7.n1.a2}
}

@Inbook{DiazRamos,
author="D{\'i}az-Ramos, Jos{\'e} Carlos
and Dom{\'i}nguez-V{\'a}zquez, Miguel
and Otero, Tom{\'a}s",

title="Homogeneous Hypersurfaces in Symmetric Spaces",
bookTitle="New Trends in Geometric Analysis: Spanish Network of Geometric Analysis 2007-2021",
year="2023",
publisher="Springer Nature Switzerland",
address="Cham",
pages="141--190",
isbn="978-3-031-39916-9",
doi="10.1007/978-3-031-39916-9_5",
url="https://doi.org/10.1007/978-3-031-39916-9_5"
}

@article{DIAZRAMOSComplexhyper,
title = {Isoparametric hypersurfaces in complex hyperbolic spaces},
journal = {Advances in Mathematics},
volume = {314},
pages = {756-805},
year = {2017},
issn = {0001-8708},
doi = {https://doi.org/10.1016/j.aim.2017.05.012},
url = {https://www.sciencedirect.com/science/article/pii/S0001870816303826},
author = {José Carlos Díaz-Ramos and Miguel Domínguez-Vázquez and Víctor Sanmartín-López}
}

@article{MiguelComplexPro,
 ISSN = {00029947, 10886850},
 URL = {https://www.jstor.org/stable/tranamermathsoci.368.2.1211},
 author = {Miguel Domínguez-Vázquez},
 journal = {Transactions of the American Mathematical Society},
 number = {2},
 pages = {1211--1249},
 publisher = {American Mathematical Society},
 title = {Isoparametric foliations on complex projective spaces},
 urldate = {2025-04-28},
 volume = {368},
 year = {2016}
}

@article{JoaoJoao,
title = {Isoparametric hypersurfaces in product spaces},
journal = {Differential Geometry and its Applications},
volume = {88},
pages = {102005},
year = {2023},
issn = {0926-2245},
doi = {https://doi.org/10.1016/j.difgeo.2023.102005},
url = {https://www.sciencedirect.com/science/article/pii/S0926224523000311},
author = {João Batista Marques {dos Santos} and João Paulo {dos Santos}}
}

@book{JSC,
    AUTHOR = {Berndt, J\"urgen and Console, Sergio and Olmos, Carlos
              Enrique},
     TITLE = {Submanifolds and holonomy},
    SERIES = {Monographs and Research Notes in Mathematics},
   EDITION = {Second},
 PUBLISHER = {CRC Press, Boca Raton, FL},
      YEAR = {2016},
     PAGES = {xxxviii+456},
      ISBN = {978-1-4822-4515-8},
   MRCLASS = {53C29 (53B25 53C40)},
  MRNUMBER = {3468790},
MRREVIEWER = {Miguel\ Dom\'inguez-V\'azquez},
       DOI = {10.1201/b19615},
       URL = {https://doi.org/10.1201/b19615},
}

@book{CeRyan,
    AUTHOR = {Cecil, Thomas E. and Ryan, Patrick J.},
     TITLE = {Geometry of hypersurfaces},
    SERIES = {Springer Monographs in Mathematics},
 PUBLISHER = {Springer, New York},
      YEAR = {2015},
     PAGES = {xi+596},
      ISBN = {978-1-4939-3245-0; 978-1-4939-3246-7},
   MRCLASS = {53C40 (53-02)},
  MRNUMBER = {3408101},
MRREVIEWER = {Juan\ de D. P\'erez},
       DOI = {10.1007/978-1-4939-3246-7},
       URL = {https://doi.org/10.1007/978-1-4939-3246-7},
}

@article{ManfioSantosVeken,
title = {Hypersurfaces of $\mathbb{S}^3\times \mathbb{R}$ and $\mathbb{H}^3\times \mathbb{R}$ with constant principal curvatures},
journal = {Journal of Geometry and Physics},
volume = {213},
pages = {105495},
year = {2025},
issn = {0393-0440},
doi = {https://doi.org/10.1016/j.geomphys.2025.105495},
url = {https://www.sciencedirect.com/science/article/pii/S0393044025000798},
author = {F. Manfio and J.B.M. {dos Santos} and J.P. {dos Santos} and J. {Van der Veken}}
}

@article{GaoMaYao,
    AUTHOR = {Gao, Dong and Ma, Hui and Yao, Zeke},
     TITLE = {On hypersurfaces of {$\mathbb{H}^2\times\mathbb{H^2}$}},
   JOURNAL = {Sci. China Math.},
  FJOURNAL = {Science China. Mathematics},
    VOLUME = {67},
      YEAR = {2024},
    NUMBER = {2},
     PAGES = {339--366},
      ISSN = {1674-7283,1869-1862},
   MRCLASS = {53C42 (53B25 53C40)},
  MRNUMBER = {4698771},
MRREVIEWER = {J.\ Carlos\ D\'iaz-Ramos},
       DOI = {10.1007/s11425-022-2103-2},
       URL = {https://doi.org/10.1007/s11425-022-2103-2},
}

@article{Urbano,
author = {Urbano, Francisco},
year = {2016},
month = {06},
pages = {},
title = {On hypersurfaces of $\mathbb{S}^2\times\mathbb{S}^2$},
volume = {27},
journal = {Communications in Analysis and Geometry},
doi = {10.4310/CAG.2019.v27.n6.a7}
}

@article{Somigliana,
author = {Somigliana, C.},
year = {1918-19},
pages = {974-979},
title = {Sulle relazioni fra il principio di Huygens e l’ottica geometrica},
volume = {LIV},
journal = {Atti Acc. Sc. Torino}
}

@article{Segre,
author = {Segre, B.},
year = {1938},
pages = {203–207},
title = {Famiglie di ipersuperficie isoparametriche negli spazi euclidei ad un qualunque numero di dimensioni},
volume = {27},
journal = {Atti Accad. Naz. Lincei Rend. Cl. Sci. Fis. Mat. Natur. (6)}
}

@article{MiguelTarciosTomas,
      title={Polar actions on homogeneous 3-spaces}, 
      author={Miguel Dominguez-Vazquez and Tarcios A. Ferreira and Tomas Otero},
      JOURNAL = {Annali di Matematica Pura ed Applicata},
      PAGES = {903--927},
      year={2026},
      VOLUME = {205},
      eprint={2505.05898},
      archivePrefix={arXiv},
      primaryClass={math.DG},
      DOI = {10.1007/s10231-025-01627-},
      url={https://doi.org/10.1007/s10231-025-01627-3}, 
}

@article {Ko1958,
    AUTHOR = {Kobayashi, S.},
     TITLE = {Compact homogeneous hypersurfaces},
   JOURNAL = {Trans. Amer. Math. Soc.},
  FJOURNAL = {Transactions of the American Mathematical Society},
    VOLUME = {88},
      YEAR = {1958},
     PAGES = {137--143},
      ISSN = {0002-9947,1088-6850},
   MRCLASS = {53.00},
  MRNUMBER = {96284},
MRREVIEWER = {W.\ M.\ Boothby},
       DOI = {10.2307/1993242},
       URL = {https://doi.org/10.2307/1993242},
}

@article {Chi2020,
    AUTHOR = {Chi, Q.-S.},
     TITLE = {Isoparametric hypersurfaces with four principal curvatures,
              {IV}},
   JOURNAL = {J. Differential Geom.},
  FJOURNAL = {Journal of Differential Geometry},
    VOLUME = {115},
      YEAR = {2020},
    NUMBER = {2},
     PAGES = {225--301},
      ISSN = {0022-040X,1945-743X},
   MRCLASS = {53C42 (53C40)},
  MRNUMBER = {4100704},
MRREVIEWER = {Ruy\ Tojeiro},
       DOI = {10.4310/jdg/1589853626},
       URL = {https://doi.org/10.4310/jdg/1589853626},
}

@article{scott83,
    AUTHOR = {Scott, Peter},
     TITLE = {The geometries of {$3$}-manifolds},
   JOURNAL = {Bull. London Math. Soc.},
  FJOURNAL = {The Bulletin of the London Mathematical Society},
    VOLUME = {15},
      YEAR = {1983},
    NUMBER = {5},
     PAGES = {401--487},
      ISSN = {0024-6093},
   MRCLASS = {57N10 (22E10 53C20)},
  MRNUMBER = {705527},
MRREVIEWER = {John Hempel},
       DOI = {10.1112/blms/15.5.401},
       URL = {https://doi.org/10.1112/blms/15.5.401},
}

@misc{dhaeneGuoxin,
      title={Homogeneous hypersurfaces of the four-dimensional Thurston geometry ${\rm Sol_0^4}$. arXiv 2508.10545}, 
      author={Marie D'haene and Guoxin Wei and Zeke Yao and Xi Zhang},
      year={2025},
      eprint={2508.10545},
      archivePrefix={arXiv},
      primaryClass={math.DG},
      url={https://arxiv.org/abs/2508.10545}, 
}

\end{document}